\newcommand{\rebuttal}[1]{\textcolor{black}{#1}}
\theoremstyle{plain}
\newtheorem{theorem}{Theorem}
\newtheorem{lemma}[theorem]{Lemma}
\theoremstyle{definition}
\theoremstyle{remark}
\newtheorem{remark}[theorem]{Remark}
\icmltitlerunning{
Optimization in SciML Should Employ the Function Space Geometry
}
\begin{document}

\twocolumn[
\icmltitle{Position: Optimization in SciML Should Employ the Function Space Geometry}




\begin{icmlauthorlist}
\icmlauthor{Johannes Müller}{yyy}
\icmlauthor{Marius Zeinhofer}{sch}
\end{icmlauthorlist}

\icmlaffiliation{yyy}{Chair of Mathematics of Information Processing, RWTH Aachen University, Aachen, Germany}
\icmlaffiliation{sch}{Simula Research Laboratory, Oslo, Norway}

\icmlcorrespondingauthor{Johannes M\"uller}{mueller@mathc.rwth-aachen.de}
\icmlcorrespondingauthor{Marius Zeinhofer}{mariusz@simula.no}

\icmlkeywords{Machine Learning, ICML}

\vskip 0.3in
]



\printAffiliationsAndNotice{}  

\begin{abstract}
Scientific machine learning (SciML) is a relatively new field that aims to solve problems from different fields of natural sciences using machine learning tools. 
It is well-documented that the optimizers commonly used in other areas of machine learning perform poorly on many SciML problems.
We provide an infinite-dimensional view on optimization problems encountered in scientific machine learning and advocate for the paradigm \emph{first optimize, then discretize} for their solution. 
This amounts to first choosing an appropriate infinite-dimensional algorithm which is then discretized in a second step. 
To illustrate this point, we show that recently proposed state-of-the-art algorithms for SciML applications can be derived within this framework. 
As the infinite-dimensional viewpoint is presently underdeveloped in scientific machine learning, we formalize it here and advocate for its use in SciML in the development of efficient optimization algorithms.   
\end{abstract}

\section{Introduction}\label{sec:introduction}
The investigation of optimization problems in function spaces dates back at least to Newton's minimal resistance problem~\cite{newton1987philosophiae}, sparking the field today known as the calculus of variations. It took the mathematical community until the beginning of the 20th century for the rigorous concept of function spaces to emerge, initiated by works of Fr\'echet, Hilbert, Riesz, and Fischer among many others. 
Today, optimization problems in function spaces are the basis for many applications in engineering and science, including fluid dynamics, solid mechanics, quantum mechanics, and optimal design to name but a few areas of applications. Optimization in function spaces is intimately connected to the solution of \emph{partial differential equations} (PDEs).

Recently, efforts have been made to apply machine learning methods to scientific problems posed in function spaces. Among the goals are the seamless integration of observational data and (partial) physical knowledge in the form of PDEs using physics-informed neural networks (PINNs), the solution of high-dimensional PDEs, such as the many-electron Schr\"odinger equation, and the design of fast neural surrogate models for many-query applications to replace computationally expensive physics-based models, typically referred to as neural operators. We collect these different approaches under the name Scientific Machine Learning (SciML), see also Subsection~\ref{sec:sciml_methods} for more details. Common to all of the methods is that they can be formulated in infinite-dimensional function spaces that are dictated by the underlying PDEs.

Many of the SciML methods lead to notoriously hard optimization problems in practice, different from purely data-driven deep learning applications. 
For PINNs in particular, it is well documented that first-order methods plateau without reaching high accuracy~\cite{krishnapriyan2021characterizing, wang2021understanding, zeng2022competitive}, which has led many people to believe that 
\begin{quote}
    \emph{there has been a lack of research on optimization tasks for PINNs}~\cite{cuomo2022scientific}. 
\end{quote} 
In particular, for neural network based PDE solvers, the optimization error seems to be dominating the approximation and generalization error, hence rendering these problems very different from many deep learning related tasks.
More principled approaches to optimization in SciML have just started to be developed~\cite{pmlr-v202-muller23b, zeng2022competitive, anonymous2024}. For variational Monte Carlo methods with neural network ansatz functions the gold standard is K-FAC~\cite{hermann2022ab}, however, the optimization process is still very resource-consuming~\cite{pfau2020ab,li2023forward}. In general, there is no clear consensus on best practices for principled choices of optimization algorithms, however, there is a tendency towards second-order optimization algorithms unlike in standard deep learning applications.

\paragraph{Function-space inspired methods for SciML}
Function-space inspired methods provide a principled way to design optimizers that take the specific problem structure into account. 
Such approaches have been proposed and used in supervised learning and reinforcement learning~\cite{amari1998natural, kakade2001natural, martens2020new} to great success. Here, we discuss their potential in the context of SciML. 
\textbf{
We advocate the position that modern optimization algorithms in SciML should employ the function space geometry.}
The idea is to choose a suitable infinite-dimensional optimization algorithm based on the properties of the continuous formulation.
Only after the choice of an optimization algorithm, the problem is discretized. 
This corresponds to the well-known \emph{first optimize, then discretize} approach in PDE-constrained optimization, where linear methods rather than networks are used~\cite{hinze2008optimization}. 
We summarize our main conclusions as follows:
\begin{itemize}
    \item We argue that most state-of-the-art optimizers in SciML can be obtained from a function-space principle, although they are not commonly motivated this way, see Section~\ref{sec:concrete_examples}. 
    This way, we identify an emerging trend in optimization algorithms for SciML. 
    \item Descpite the initial success of function-space inspired methods in SciML, they have not yet been widely adopted in the field. Hence, we advocate for their use and development as we believe this has the potential to help SciML mature into off-the-shelf technology that can be applied at an industrial scale. 
\end{itemize}

\rebuttal{The article is structured as follows. } 
\rebuttal{
In Section~\ref{sec:preliminaries} we discuss the problems of consideration form the field of Scientific machine learning as well as the importance of the development of more principled and efficient optimizers in this field.  
In Section~\ref{sec:optimization_in_hilbert}, we describe the derivation of function-space inspired methods:} Formulate the problem at the continuous level. Typically, one views the problem as an infinite-dimensional problem discretized by a neural network ansatz.
Choose an optimization algorithm that is well-motivated for the infinite-dimensional problem\footnote{The reader may for instance think of Newton's method one-step convergence for the solution of a quadratic problem.} and discretize the algorithm in the tangent space of the neural network ansatz. 
\rebuttal{In Section~\ref{sec:concrete_examples}, we discuss how different state-of-the-art methods in SciML can be derived within this framework and discuss their superiority to first-order and certain second-order methods.} 

\section{\rebuttal{Scientific Machine Learning}\label{sec:preliminaries}}
\rebuttal{We describe the problems from the field of scientific machine learning we consider here. 
Further, we give an overview the current optimization approaches to these problems highlighting that insufficient optimization is widely believed to be the main bottleneck in this field.  
}

\subsection{Methods of Scientific Machine Learning}\label{sec:sciml_methods}
We briefly discuss a sample of influential works in scientific machine learning. 
The unifying theme is the desire to make neural networks conform to physical laws described in terms of partial differential equations. 
In the later sections, we refer to these examples and discuss the importance of infinite-dimensional optimization algorithms for them.

\paragraph{Physics-Informed Neural Networks}
Proposed by~\cite{dissanayake1994neural} and popularized by~\cite{raissi2019physics}, physics-informed neural networks merge observational data with physical prior knowledge in the form of partial differential equations, see also the review articles~\cite{karniadakis2021physics, cuomo2022scientific}. 
A typical example is the task of finding a function $u_\theta$ parametrized as a neural network that matches observational data $u_d$ and satisfies a partial differential equation, say
\begin{align*}
    \partial_tu + \mathcal N(u) &= 0,
\end{align*}
with given boundary and initial data. Here, $\mathcal N$ denotes a partial differential operator. 
The PINN formulation\footnote{We have omitted terms corresponding to initial and boundary values for brevity of presentation.} of the above problem reads
\begin{equation}\label{eq:PINN}
    \min_{\theta\in\Theta}L(\theta)
    = 
    \frac12\| u_\theta - u_d \|^2_{L^2} + \frac12\| \partial_tu_\theta + \mathcal N(u_\theta) \|^2_{L^2}.
\end{equation}
Minimizing $L$ corresponds to solving the PDE and conforming to the observational data $u_d$. 
Alternatively, if a variational principle for the PDE at hand is available, this can be used to formulate a loss function. 
This approach, known as the deep Ritz method, was proposed in~\cite{weinan2018deep}. 
We remark that physics-informed neural networks and the deep Ritz method can also be regarded as a method to solve PDEs, ignoring the data term in the loss above. 

\paragraph{Variational Monte Carlo Methods}
Recently, neural network based variational Monte Carlo (VMC) methods have shown promising results for the solution of the many-electron Schr\"odinger equation and presently allow ab initio solutions for system sizes of the order of $100$ electrons. 
We refer to~\cite{hermann2022ab} for an overview of this fast-growing field. 
The idea is to parametrize the wave function $\psi$ by a neural network $\psi_\theta$, where we denote the trainable parameters by $\theta$ and to minimize the Raleigh quotient 
\begin{equation*}
    \min_{\theta\in\Theta}\frac{\langle \psi_\theta|\hat H |\psi_\theta \rangle}{\langle \psi_\theta|\psi_\theta \rangle}
\end{equation*}
where $\hat H$ is the Hamiltonian, which is typically a linear second-order partial differential operator. 

\paragraph{Operator Learning}
Operator learning aims at approximating an operator $\mathcal G\colon X \to Y$ between function spaces $X$ and $Y$ by a neural surrogate $\mathcal G_\theta \approx \mathcal G$, see~\cite{kovachki2023neural, li2020fourier}. 
A prototypical example is the emulation of the solution map $\mathcal G\colon f \mapsto u$ of a given PDE, where $u=\mathcal Gf$ is the solution of the PDE with data $f$. 
Neural operators are typically trained using a regression formulation, although incorporating PDE information is also possible~\cite{li2021physics}. Combining both, a neural operator loss function has the form
\begin{equation*}
    L(\theta) 
    =
    \frac12 \sum_{i=1}^N \|\mathcal G_\theta (f_i) - u_i\|^{2}_Y
    +
    \|\mathcal D(\mathcal G_\theta (f_i)) - f_i\|^2_Y,
\end{equation*}
where $(f_i, u_i)_{i=1,\dots,N}$ denotes the training data, and the $u_1,\dots,u_N$ is typically generated by a classical solver. 
The second term in the objective may reduce the need for training data at the expense of a more difficult optimization problem. 

\subsection{Optimization in Scientific Machine Learning} 
\rebuttal{
In their nature, PINNs and related methods are very different from the problems encountered in supervised learning and reinforcement learning. 
Indeed, the points used in the numerical discretization of the objective function~\eqref{eq:PINN} play the role of data points, hence one has access to an unlimited amount of data, which renders the problem as an optimization rather than a statistical one. 
}
\rebuttal{
Further, it is known that the optimization problems are very badly conditioned and 
consequently, it is 
commonly believed that optimization is one of the biggest challenges in SciML with training pathologies being well documented~\cite{wang2021understanding, krishnapriyan2021characterizing, cuomo2022scientific, de2023operator, liu2024preconditioning}}. 
\rebuttal{
To address these difficulties in the optimization, various adaptive weightings of the loss~\cite{wang2021understanding, wang2022and} and sampling strategies~\cite{lu2021deepxde, nabian2021efficient, daw2022rethinking, zapf2022investigating, wang2022respecting, wu2023comprehensive, tang2023pinns, jiao2023gas} have been suggested, improving naive methods, but failing to produce satisfactory accuracy. 
}
Achieving highly accurate PINN solutions has just recently been realized in~\cite{zeng2022competitive, pmlr-v202-muller23b}. 
As we show in Section~\ref{sec:concrete_examples}, all methods that succeed in producing accurate solutions can be interpreted from an infinite-dimensional viewpoint. \rebuttal{An illustration of the importance of the infinite-dimensional perspective is provided in Figure~\ref{fig:training_curves}, which demonstrates that respecting the function space geometry can result in orders of magnitude improvement.  }
\begin{figure}
    \centering
    \includegraphics[width=0.5\textwidth, trim={0 0.2cm 0cm 0cm},clip]{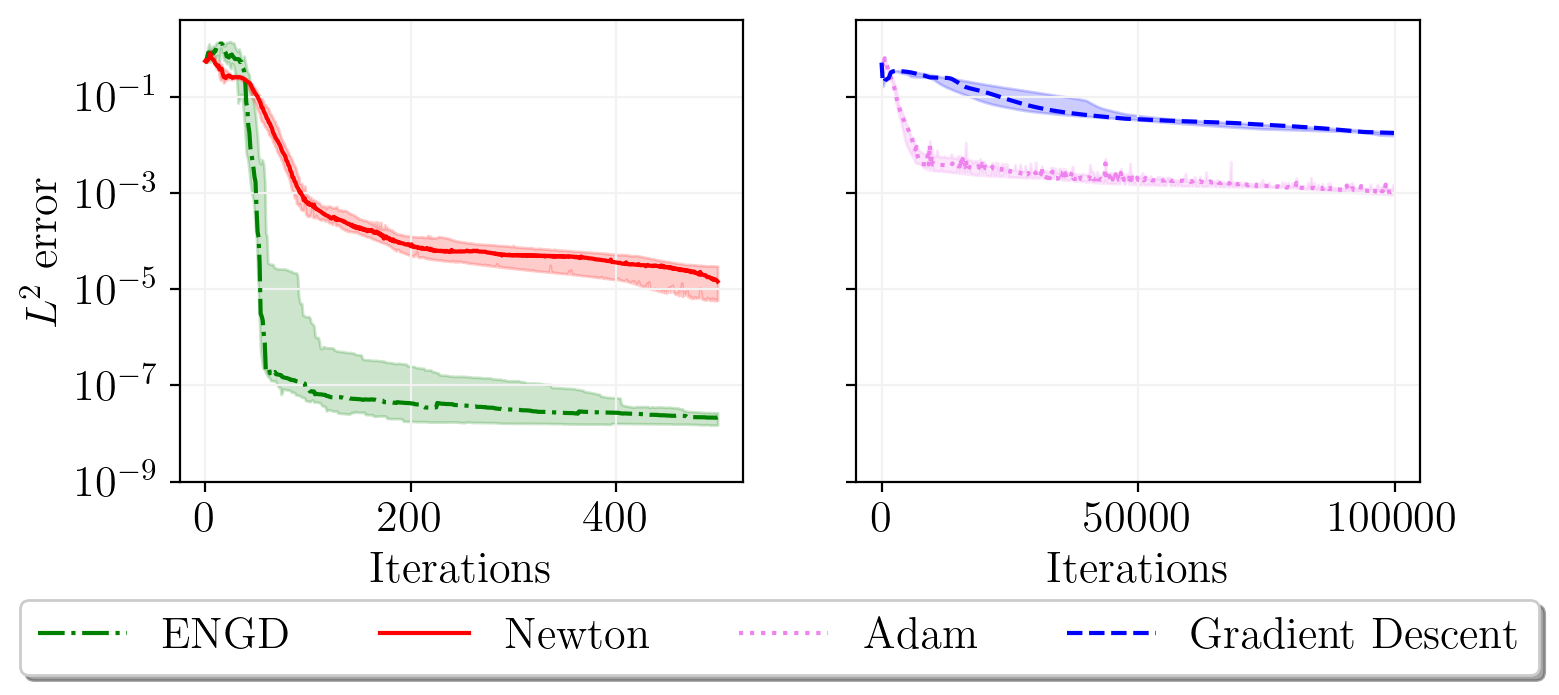}
    \vspace{-.5cm}
    \caption{\rebuttal{Training curves for a PINN for a 2-dimensional Poisson equation; the first order optimizers (Adam and Gradient Descent) plateau, the second-order optimizers (ENGD and Newton) perform much better, but the function-space inspired optimizer (ENGD) reaches the highest accuracy by several orders of magnitude.}}
    \label{fig:training_curves}
\end{figure}

Variational Monte Carlo with neural network ansatz functions routinely relies on the natural gradient method as proposed in~\cite{amari1998natural} and uses the K-FAC approximation of the Fisher matrix, see~\cite{martens2015optimizing, martens2020new}. 
In Section~\ref{sec:l2_vmc}, we illustrate the derivation of the natural gradient method for VMC as an $L^2$-gradient descent algorithm in function space. 
Note that for large problem sizes, applications become dramatically resource-demanding, and computation times up to $10^4$ GPU hours for a single experiment are reported in~\cite{li2023forward}. 
This illustrates the importance of a well-suited training algorithm.

Neural operators for PDE-based applications usually require an offline data generation phase using classical, grid-based solvers to produce training data. 
Consequently, the neural operator needs to be trained before it is eventually ready for downstream applications. 
The training is typically carried out with stochastic first-order methods like Adam.

\section{\rebuttal{Function-Space Inspired Optimization}} \label{sec:optimization_in_hilbert}
In this section, we discuss the general principle of the discretization of iterative algorithms in Hilbert spaces, specific examples are provided in Section~\ref{sec:concrete_examples}. 
Assume that we aim to solve a problem in a Hilbert space $\mathcal H$, say an optimization or saddle-point problem. 
To tackle this problem numerically, we consider neural network functions of a given architecture that form a parametric class of functions 
\begin{equation*}
    \mathcal M = \{ u_\theta : \theta\in\Theta \} \subseteq \mathcal H,
\end{equation*}
where $\Theta=\mathbb R^p$ is the parameter space.  
It is usually straightforward to design neural networks such that $u_\theta\in\mathcal H$ as this typically relies on smoothness properties of the activation function. 
We will sometimes require the map
\begin{equation}\label{eq:parametrization}
    P\colon\Theta \to \mathcal M\subseteq \mathcal H, \quad \theta\mapsto u_\theta,
\end{equation}
which we call the \emph{parametrization} {and assume that it is differentiable}. We furthermore define the \emph{generalized tangent space} of $\mathcal M$ at $u_\theta$ as
\begin{equation}\label{eq:tangent_space}
    \operatorname{span}
    \{
    \partial_{\theta_1}u_\theta,\dots,\partial_{\theta_p}u_\theta
    \}
    =
    \operatorname{ran}(DP(\theta)) \subset\mathcal H,
\end{equation}
where $DP(\theta)$ denotes the derivative of the parametrization. Here, $\partial_{\theta_1}u_\theta, \dots, \partial_{\theta_p}u_\theta$ might be linearly dependent.

The definitions above are not specific for neural networks but are meaningful for general parametric ansatz classes. 

\rebuttal{
The idea of designing optimization algorithms for parametric models that take the function space into account dates back to the seminal works of Amari, who provided a discretization of a function space gradient descent known as natural gradient descent~\cite{amari1998natural} and we refer to the surveys of~\cite{ollivier2017information, martens2020new} on function space inspired methods utilizing the Fisher-Rao geometry. 
Apart from their use in supervised learning, where it is attributed to be more robust to data noise~\cite{amari2020does}, natural gradient methods enjoy great popularity in reinforcement learning~\cite{kakade2001natural, peters2003reinforcement, morimura2008new, moskovitz2020efficient}. 
Note that the methods developed in supervised and reinforcement learning can not directly be applied in SciML, where the function spaces often consists of deterministic functions rather then probabilistic models. 
Here, we describe the general philosophy of function-space inspired methods. }

\subsection{Discretization of Minimization Methods}
\rebuttal{Consider an uncontrained optimization problem}
\begin{equation}\label{eq:abstract_minimization}
    \min_{u\in\mathcal H}E(u),
\end{equation}
where $E\colon\mathcal H\to\mathbb R$ is a differentiable function on a Hilbert space $\mathcal H$.
For this we consider the {following} scheme 
\begin{equation}\label{eq:abstract_minimization_algorithm}
    u_{k+1} = u_k + \eta_k d_k, 
\end{equation}
with an update direction $d_k$ that is given by 
\begin{equation}\label{eq:gradient_based_update}
    d_k = -T_{u_k}^{-1}(DE(u_k)),
\end{equation}
where $T_{u_k}\colon\mathcal H\to\mathcal H^*$ is an invertible linear map that is given by the concrete algorithm at hand. For example, we recover gradient descent with $d_k=-\nabla E(u_k)$ and Newton's method with $d_k = -D^2E(u_k)^{-1}(DE(u_k))$. In Section~\ref{sec:concrete_examples}, we provide explicit examples and discuss different choices of function space algorithms in more detail.  

Corresponding to~\eqref{eq:abstract_minimization}, we define the loss function
\begin{equation*}
    L\colon\Theta \to \mathbb R, \quad L(\theta) = E(u_\theta)
\end{equation*}
and aim to design an algorithm in parameter space 
\begin{equation}\label{eq:parameter_algorithm}
    \theta_{k+1} = \theta_k + \eta_k w_k, 
\end{equation}
such that $u_{\theta_k}\approx u_k$. 
To understand the dynamics we apply Taylor's theorem to  $u_{\theta_{k+1}}=P(\theta_k + \eta_k w_k)$ and obtain 
\begin{equation*}
    u_{\theta_{k+1}} = u_{\theta_k} + \eta_k DP(\theta_k) w_k + O(\eta_k^2 \lVert w_k \rVert^2). 
\end{equation*}
To mimic~\eqref{eq:abstract_minimization_algorithm}, we would like that $DP(\theta_k)w_k\approx d_k$ for which we choose 
\begin{equation}\label{eq:fitting_the_update_direction}
    w_k\in\arg\min_{w\in\mathbb R^p} \frac12\lVert DP(\theta_k)w- d_k \rVert_{T_{u_{\theta_k}}}^2,
\end{equation}
where $\| w \|^2_{T_{u_{\theta_k}}} = \langle T_{u_{\theta_k}}w,w \rangle$. Computing the normal equations of~\eqref{eq:fitting_the_update_direction} yields that the update direction is given by
\begin{equation}\label{eq:gramian_update_direction}
    w_k = -G(\theta_k)^\dagger \nabla L(\theta_k),
\end{equation}
where $G(\theta_k)\in\mathbb R^{p\times p}$ denotes the Gramian matrix
\begin{equation}
    G(\theta_k)_{ij}
    = 
    \langle T_{u_\theta} \partial_{\theta_i} u_\theta, \partial_{\theta_j} u_\theta\rangle,
\end{equation}

see Appendix~\ref{appendix:proof_best_fit}. 
By $G(\theta_k)^\dagger$ we denote a pseudo-inverse of $G(\theta_k)$, i.e., a matrix satisfying $GG^\dagger G = G$.
The discretized algorithm now reads
\begin{equation}\label{eq:discretized_algorithm}
    \theta_{k+1} = \theta_k - \eta_k G(\theta_k)^\dagger \nabla L(\theta_k).
\end{equation}
A typical damping strategy consists of adding an $\epsilon_k$-scaled identity to $G$.
We have derived the discretized algorithm by fitting the update direction~\eqref{eq:fitting_the_update_direction}. This implies that the update direction $DP(\theta_k)w_k$ in function space is the projection of $d_k$ onto $\operatorname{ran}(DP(\theta_k))$, i.e., the tangent space of the model, which is well known for natural gradients in the finite-dimensional setting, see \cite{amari2016information,van2022invariance}, where we defer the proof to Appendix~\ref{sec:appendix_update_direction}. 

\begin{restatable}{theorem}{projectionthm}
\label{thm:projection_theorem}
Assume we are in the above setting, i.e., consider an algorithm of the form~\eqref{eq:abstract_minimization_algorithm} that satisfies~\eqref{eq:gradient_based_update}. We assume additionally that the $T_{u_{\theta_k}}$ are symmetric and positive definite. Then, for the discretized algorithm~\eqref{eq:discretized_algorithm} it holds
    \begin{align}\label{eq:FS_dynamics_discretized_algorithm}
        u_{\theta_{k+1}} 
        =  
        u_{\theta_k} 
        -
        \eta_k \Pi_{u_{\theta_k}}[
        T_{u_{\theta_k}}^{-1}(DE(u_{\theta_k}))
        ] + \epsilon_k,
    \end{align}
    where $\Pi_{u}$ denotes the orthogonal projection onto the tangent space with respect to the inner product $\langle T_{u}\cdot, \cdot \rangle$. The term $\epsilon_k$ corresponds to an error vanishing quadratically in the step and step size length 
    \begin{equation*}
        \epsilon_k = O(\eta_k^2\lVert G(\theta_k)^\dagger \nabla L(\theta_k) \rVert^2).
    \end{equation*}
\end{restatable}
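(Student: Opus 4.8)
The plan is to combine a first-order Taylor expansion of the parametrization with the observation that the best-fit update direction from~\eqref{eq:fitting_the_update_direction} is, by construction, an orthogonal projection. First I would insert the parameter update $\theta_{k+1} = \theta_k + \eta_k w_k$ with $w_k = -G(\theta_k)^\dagger \nabla L(\theta_k)$ from~\eqref{eq:discretized_algorithm} into $P$ and apply Taylor's theorem, exactly as already done in the text, to obtain
\begin{equation*}
    u_{\theta_{k+1}} = u_{\theta_k} + \eta_k DP(\theta_k) w_k + O(\eta_k^2 \lVert w_k \rVert^2).
\end{equation*}
This already isolates the claimed remainder $\epsilon_k = O(\eta_k^2 \lVert G(\theta_k)^\dagger \nabla L(\theta_k)\rVert^2)$, so it remains to identify the linear term $DP(\theta_k) w_k$ with $-\Pi_{u_{\theta_k}}[T_{u_{\theta_k}}^{-1}(DE(u_{\theta_k}))]$.

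For this I would unwind the normal equations behind~\eqref{eq:gramian_update_direction}. Since $T_{u_{\theta_k}}$ is symmetric and positive definite, $\langle T_{u_{\theta_k}}\cdot,\cdot\rangle$ is an inner product on $\mathcal H$, and~\eqref{eq:fitting_the_update_direction} is precisely the problem of best-approximating $d_k$ from the finite-dimensional subspace $\operatorname{ran}(DP(\theta_k))$ in this inner product. Its optimality condition is $DP(\theta_k)^* T_{u_{\theta_k}}(DP(\theta_k) w - d_k) = 0$, which—using that $(DP^*T DP)_{ij} = G(\theta_k)_{ij}$ and that $DP(\theta_k)^* T_{u_{\theta_k}} d_k = -DP(\theta_k)^* DE(u_{\theta_k}) = -\nabla L(\theta_k)$ by the chain rule $\nabla L(\theta_k)_i = DE(u_{\theta_k})[\partial_{\theta_i} u_{\theta_k}]$—reduces to the Gramian system $G(\theta_k) w = -\nabla L(\theta_k)$ solved by $w_k$, see Appendix~\ref{appendix:proof_best_fit}. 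By the standard characterization of the orthogonal projection onto a closed subspace, any solution $w$ of this system satisfies $DP(\theta_k) w = \Pi_{u_{\theta_k}}[d_k]$.

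The one point that needs care—and which I expect to be the main obstacle—is that $w_k$ is only defined up to the choice of pseudo-inverse $G(\theta_k)^\dagger$, whereas $\Pi_{u_{\theta_k}}[d_k]$ is unique, so I must verify that $DP(\theta_k) w_k$ is independent of this choice. Here positive-definiteness of $T_{u_{\theta_k}}$ is essential: for any $v$ with $G(\theta_k) v = 0$ one has $\lVert DP(\theta_k) v\rVert^2_{T_{u_{\theta_k}}} = \langle G(\theta_k) v, v\rangle = 0$, whence $DP(\theta_k) v = 0$, so that $\ker G(\theta_k) = \ker DP(\theta_k)$. Consequently any two solutions of $G(\theta_k) w = -\nabla L(\theta_k)$ differ by an element of $\ker DP(\theta_k)$ and yield the same vector $DP(\theta_k) w_k = \Pi_{u_{\theta_k}}[d_k] = -\Pi_{u_{\theta_k}}[T_{u_{\theta_k}}^{-1}(DE(u_{\theta_k}))]$. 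Substituting this into the Taylor expansion gives~\eqref{eq:FS_dynamics_discretized_algorithm} and completes the argument.
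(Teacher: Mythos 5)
Your proof is correct and takes essentially the same route as the paper's: a first-order Taylor expansion of the parametrization combined with the normal-equations lemma of Appendix~\ref{appendix:proof_best_fit}, identifying $DP(\theta_k)w_k$ with the $\langle T_{u_{\theta_k}}\cdot,\cdot\rangle$-orthogonal projection of $d_k$ onto the tangent space. Your additional verification that $DP(\theta_k)w_k$ does not depend on the choice of pseudo-inverse (via $\ker G(\theta_k)=\ker DP(\theta_k)$ under positive definiteness) is a refinement the paper's one-line proof leaves implicit, not a different argument.
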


Inspecting the function space dynamics~\eqref{eq:FS_dynamics_discretized_algorithm} of the discretized algorithm, we see that they agree with the original function space algorithm~\eqref{eq:abstract_minimization_algorithm} up to the orthogonal projection onto the tangent space of the model and an error vanishing quadratically with the step size. 
\rebuttal{See also Figure~\ref{fig:pushes}, which confirms that the function-space inspired methods (E-NG and GN-NG) lead to function updates that compensate the error of the method much better. }

\begin{figure}
    \centering
    \begin{tikzpicture}
        \node[inner sep=0pt] (r1) at (0,0)
    {\includegraphics[width=0.5\textwidth, trim={0 0 2.7cm .9cm},clip]{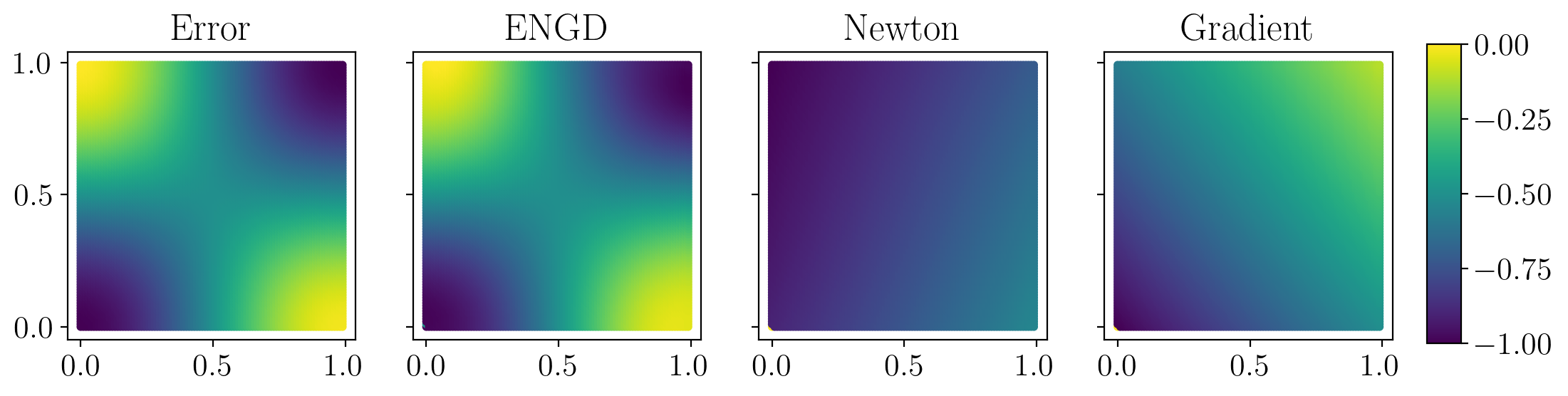}};
        \node[inner sep=0pt] (r1) at (0,-2.5)
    {\includegraphics[width=0.5\textwidth, trim={0.1cm 0 2.9cm 0.8cm},clip]{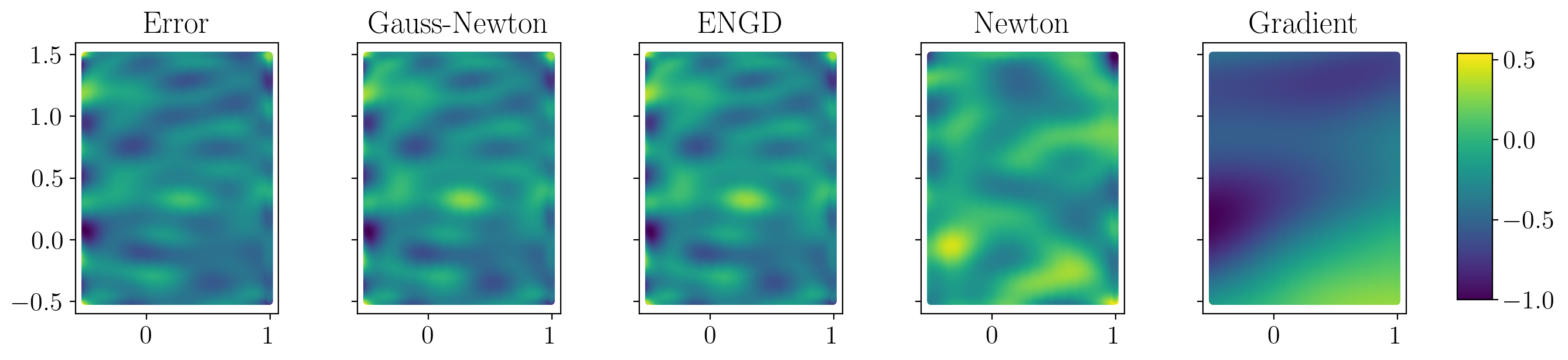}};
        \node[inner sep=0pt] (r1) at (-3,1.3)
    {\scriptsize $u_\theta-u^\star$};
        \node[inner sep=0pt] (r1) at (-.9,1.3)
    {\scriptsize E-NG};
        \node[inner sep=0pt] (r1) at (1.2,1.3)
    {\scriptsize Newton};
        \node[inner sep=0pt] (r1) at (3.3,1.3)
    {\scriptsize GD};
        \node[inner sep=0pt] (r1) at (-3.2,-1.3)
    {\scriptsize $u_\theta-u^\star$};
        \node[inner sep=0pt] (r1) at (-1.6,-1.3)
    {\scriptsize GN-NG};
        \node[inner sep=0pt] (r1) at (0.1,-1.3)
    {\scriptsize E-NG};
        \node[inner sep=0pt] (r1) at (1.8,-1.3)
    {\scriptsize Newton};
        \node[inner sep=0pt] (r1) at (3.5,-1.3)
    {\scriptsize GD};
    \end{tikzpicture}
    \vspace{-.5cm}
    \caption{\rebuttal{Shown is the heat map over the domain $\Omega$ of the function space updates for two different problems, a linear Poisson equation (top) and a steady-state Navier-Stokes system (bottom), for different optimizers, being an energy natural gradient (E-NG), a Gauß-Newton natural gradient (GN-NG), which agrees with E-NG for the Poisson equation, Newton's methods and gradient descent (GD); In addition, the error $u_\theta-u^\star$ is shown, where $u^\star$ is the true solutions; note that the functions-space inspired methods (E-NG and GN-NG) lead to updates that can compensate the error better.}}
    \label{fig:pushes}
\end{figure}

\begin{remark}[Connection to Galerkin Discretization]
Galerkin schemes in numerical analysis refer to the discretization of linear forms and linear maps using finite dimensional vector spaces~\cite{brenner2008mathematical}. 
We can interpret the discretization procedure discussed above as a Galerkin scheme in the models tangent space, i.e., using the basis functions $\partial_{\theta_1}u_\theta,\dots, \partial_{\theta_p}u_\theta$. 
Discretizing the Fr\'echet derivative $DE(u_\theta)$ and $T_{u_\theta}$ this way yields $\nabla L(\theta)$ and $G(\theta)$. 
However, unlike in classical methods, the space used in the Galerkin discretization changes at every iteration. 
\end{remark}

\begin{remark}[Extension to Non-Symmetric \& Indefinite $T_u$]
If $T_u$ is symmetric and positive definite, it naturally defines a Riemannian metric. In this case, the discretized algorithm in~\eqref{eq:discretized_algorithm} agrees with a natural gradient method. When $T_{u}$ is non-symmetric or indefinite,~\eqref{eq:fitting_the_update_direction} is not the right notion to obtain an update direction, as $T_u$ does not correspond to an orthogonal projection in the strict sense. In this case, we still use~\eqref{eq:gramian_update_direction} although it is not the optimality condition of~\eqref{eq:fitting_the_update_direction}. Assuming, for instance, coercivity or suitable inf-sup conditions of $T_u$ guarantees that $w_k$ is a quasi-best approximation of $d_k$. For the details, we refer to Appendix~\ref{sec:extension_to_non_symmetric}.
\end{remark}

\subsection{Discretization of Saddle Point Problems}
Another important problem class is given by saddle point problems. Assume we are given a function 
\begin{equation*}
    \mathcal L\colon \mathcal H \times \mathcal V \to \mathbb R
\end{equation*}
and we are looking for a saddle point $(u^*,v^*)$, which means 
\begin{equation*}
    \mathcal L(u^*, v) \leq \mathcal L(u^*, v^*) \leq \mathcal L(u, v^*)
\end{equation*}
is satisfied for all $u\in\mathcal H$ and $v\in\mathcal V$. Such a point solves a minimax problem of the form
\begin{equation*}
    \min_{u\in\mathcal H}\max_{v\in\mathcal V}\mathcal L(u,v).
\end{equation*}
Saddle point problems arise, for instance, from Lagrangian approaches to constrained minimization problems. For their solution, we again consider algorithms of the form
\begin{equation*}
    (u_{k+1}, v_{k+1}) = (u_{k}, v_{k}) + (d^u_{k}, d^v_{k})
\end{equation*}
where we assume that $\mathcal L$ is Fr\'echet differentiable and the update direction is given by 
\begin{equation*}
    (d^{u}_{k}, d^v_{k}) = T_{u_k,v_k}^{-1}(D_u\mathcal L(u_k,v_k), D_v\mathcal L(u_k,v_k)). 
\end{equation*}
Here, $T_{u,v}\colon\mathcal H\times\mathcal V \to \mathcal H^*\times\mathcal V^*$ is a bounded, linear, and invertible map.
Algorithms of the form described above include Newton's method for the solution of a critical point of $\mathcal L$, gradient descent-ascent,  competitive gradient descent~\cite{schafer2019competitive} as well as  
\rebuttal{natural hidden gradients \cite{mladenovic2021generalized}}. 

We discretize the problem with two neural networks $u_\theta$ and $v_\psi$ with parameter spaces $\Theta$ and $\Psi$, respectively. This yields
\begin{equation*}
    L\colon\Theta \times \Psi \to \mathbb R, \quad L(\theta, \psi) = \mathcal L(u_\theta, v_\psi)
\end{equation*}
and the parameter update 
\begin{align*}
(\theta_{k+1}, \psi_{k+1}) = (\theta_{k}, \psi_{k})  + \eta_k (w^{u}_k, w^{v}_k),
\end{align*}
the update directions are computed according to 
\begin{align}
    \begin{pmatrix}
        w^u_k \\ w^v_k
    \end{pmatrix} = G(\theta_k, \psi_k)^\dagger 
    \begin{pmatrix}
        \nabla_\theta L(\theta_k, \psi_k), \\ \nabla_\psi L(\theta_k, \psi_k)
    \end{pmatrix}.
\end{align}
Here, the Gramian carries a block structure and is obtained by using
\begin{equation*}
    (\partial_{\theta_1}u_\theta,0), \dots, (\partial_{\theta_{p_\Theta}}u_\theta,0), (0, \partial_{\psi_1}v_\psi),\dots,(0,\partial_{\psi_{p_\Psi}}v_\psi)
\end{equation*}
as a generating system for the tangent spaces, for details see Appendix~\ref{appendix:saddle_point_discretization}.
For projection results along the lines of Theorem~\ref{thm:projection_theorem} see Appendix~\ref{sec:extension_to_non_symmetric}

\section{Specific Examples of Algorithms}\label{sec:concrete_examples}
We discuss several infinite-dimensional algorithms, illustrating the abstract framework of the previous section. 
As examples for applications, we discuss (i) neural network ansatz classes for the solution of the many-electron Schr\"odinger equation using an $L^2$ gradient descent, (ii) the solution of a nonlinear variational problem using the deep Ritz formulation and Newton's method in function space, (iii) Lagrange-Newton for the solution of a saddle-point formulation of a Poisson equation, and (iv) Gauss-Newton for the PINN formulation of the Navier-Stokes equations. 
In all cases, the infinite-dimensional algorithms translate to state-of-the-art methods. 
Some of these methods were previously proposed, but lacking the infinite dimensional viewpoint. 

\subsection{Hilbert Space Gradient Descent}\label{sec:l2_vmc}
We show that gradient descent in a Hilbert space corresponds to natural gradient descent in parameter space, with the Riemannian metric induced by the inner product of the Hilbert space. 
We exemplify this using the variational formulation of Schr\"odinger's equation. 
We find that the Hilbert space gradient descent discretized in tangent space corresponds to the state-of-the-art optimization method used in quantum variational Monte Carlo methods, where the wavefunction is parametrized as a neural network, see for instance~\cite{hermann2020deep, pfau2020ab}.

\paragraph{Problem Formulation}
We are interested in solving the problem 
\begin{equation}\label{eq:schrodinger_physics_notation}
    \min_{\psi}E(\psi)
    =
    \frac{\langle \psi|\hat H |\psi \rangle}{\langle \psi|\psi \rangle}
\end{equation}
where $\hat H$ is the Hamiltonian, typically a linear second-order partial differential operator, and $\psi$ denotes the wave function, see for instance~\cite{toulouse2016introduction}. 
Using an $L^2(\Omega)$ gradient descent with initial value $u_0$ for the minimization of $E$ amounts to 
\begin{equation}\label{eq:L2_gradient_flow}
    \psi_{k+1} = \psi_k - \eta_k \mathcal I^{-1}(DE(\psi_k)), \quad k=0,1,2,\dots
\end{equation}
where $\eta_k>0$ denotes a step size and $\mathcal I\colon L^2(\Omega)\to L^2(\Omega)^*$ is the Riesz isometry of $L^2(\Omega)$, given by $\psi\mapsto \langle\psi|\cdot\rangle$. Note that~\eqref{eq:L2_gradient_flow} is formal\footnote{For the discretized algorithm this is not a problem.} as we can not in general guarantee that $DE(\psi_k)\in L^2(\Omega)^*$.

\paragraph{Neural Network Discretization}
We choose a neural network ansatz $\psi_\theta$ for the wavefunction and define the loss
\begin{equation*}
    L(\theta)
    =
    \frac{\langle \psi_\theta|\hat H |\psi_\theta \rangle}{\langle \psi_\theta|\psi_\theta \rangle},
\end{equation*}
where in practice a reformulation is used and the integrals are computed using Monte Carlo quadrature. 
\rebuttal{We use the shorthand notation $\hat\psi = \frac{\psi}{\lVert \psi \rVert_{L^2}}$.}
To discretize~\eqref{eq:L2_gradient_flow}, note that the Riesz isometry $\mathcal I$ corresponds to the map $T_{u_\theta}$ and leads to the Fisher matrix
\begin{equation*}
    G(\theta)_{ij} 
    =
    \mathcal I (\partial_{\theta_i}\hat\psi_\theta)( \partial_{\theta_j}\hat\psi_\theta)
    =
    \int \partial_{\theta_i}\hat\psi_\theta\partial_{\theta_j}\hat\psi_\theta\,\mathrm dx.
\end{equation*}
The update in parameter space thus becomes
\begin{equation*}
    \theta_{k+1} = \theta_k - \eta_k G(\theta_k)^\dagger \nabla L(\theta_k).
\end{equation*}

\paragraph{Correspondence to Natural Gradient Descent}
The above discussion shows that the $L^2(\Omega)$ gradient descent leads to the well-known natural gradient descent using the Fisher information matrix. This algorithm is state of the art for quantum variational Monte Carlo methods with neural network discretization~\cite{pfau2020ab, li2023forward}. Note, that in this context the scalability of the method relies on the K-FAC approximation of $G$. We discuss scalability issues in more detail in Section~\ref{sec:scalability}.

\subsection{Newton's Method}\label{sec:newton_f_space}
Next, we showcase Newton's method for the solution of a semilinear elliptic equation in variational form, i.e., we use the Deep Ritz method~\cite{weinan2018deep} for its solution. 
The example is taken from~\cite{pmlr-v202-muller23b}, where it is shown that this approach yields highly accurate solutions, for both PINN type objectives of linear PDEs and convex minimization problems like the example discussed here.

\paragraph{Problem Formulation} 
Let $\Omega\subset\mathbb R^d$ with $d=1,2,3$ and consider the minimization problem\footnote{Dimensions larger than 3 can be considered as well, but require a different functional setting as in four or more dimensions members of $H^1(\Omega)$ are not integrable in fourth power.}
\begin{equation}\label{eq:semilinear_deep_ritz}
    \min_{u\in H^1(\Omega)}E(u) 
    =
    \int_\Omega \frac{|\nabla u|^2}{2} + \frac{u^4}{4} - f u\,\mathrm dx.
\end{equation}
Newton's method for the minimization of~\eqref{eq:semilinear_deep_ritz} for a given initial value $u_0\in H^1(\Omega)$ is
\begin{equation*}
    u_{k+1} = u_k - D^2E(u_k)^{-1}(DE(u_k)), \quad k=0,1,2,\dots
\end{equation*}
where the Hessian is given by
\begin{equation*}
    D^2E(u)(v,w) = (\nabla v,\nabla w)_\Omega + 3(u^2v, w)_\Omega.
\end{equation*}

\paragraph{Neural Network Discretization}
We choose a neural network ansatz $u_\theta$ with parameter space $\Theta$ and define the loss function
\begin{equation*}
    L(\theta) = E(u_\theta) = \int_\Omega \frac{|\nabla u_\theta|^2}{2} + \frac{ u_\theta^4}{4} - f u_\theta\,\mathrm dx.
\end{equation*}
For the discretization we note that $T_{u_\theta} = D^2E(u_\theta)$, which yields
\begin{align*}
    G(\theta)_{ij} 
    &=
    D^2E(u_\theta)(\partial_{\theta_i}u_\theta,\partial_{\theta_i}u_\theta)
    \\
    &=
    (\nabla \partial_{\theta_i}u_\theta, \nabla\partial_{\theta_j}u_\theta)_\Omega
    +
    3(u_\theta^2 \, \partial_{\theta_i}u_\theta, \partial_{\theta_j}u_\theta)_{\Omega}.
\end{align*}
The algorithm in parameter space becomes 
\begin{equation}\label{eq:engd}
    \theta_{k+1} = \theta_k - \eta_k G(\theta_k)^\dagger \nabla L(\theta_k), \quad k=0,1,2,\dots
\end{equation}
for some initial parameters $\theta_0$ and a stepsize $\eta_k>0$. 

\paragraph{Correspondence to Generalized Gauss-Newton}
To see the connection of the above method to a generalized Gauss-Newton approach \emph{in parameter space}, we compute the Hessian of $L$. In fact, $D^2L(\theta)_{ij}$ equals
\begin{equation*}
    D^2E(u_\theta)(\partial_{\theta_i}u_\theta, \partial_{\theta_j}u_\theta) + DE(u_\theta)(\partial_{\theta_i}\partial_{\theta_j}u_\theta).
\end{equation*}
Generalized Gauss-Newton methods use the first term in the above expansion in an algorithm of the form~\eqref{eq:engd}, which shows that the discretized Newton method and generalized Gauss-Newton coincide, which was recently proposed for the deep Ritz method by~\cite{hao2023gauss}.

\begin{remark}
    The method described above was proposed as a natural gradient method in~\cite{pmlr-v202-muller23b} under the name \emph{energy natural gradient descent}. It was shown that it is highly efficient and capable of producing accurate solutions for PINN-type formulations of linear PDEs and deep Ritz formulations of semilinear elliptic PDEs.
\end{remark}

\begin{remark}\label{remark:engd_is_gn_sometimes}
    Newton in function space for linear PDEs using a PINN formulation yields the standard Gauss-Newton algorithm for $l^2$ regression in parameter space. This is a consequence of the discussion in Section~\ref{sec:gauss_newton_f_space}. We refer to the Remark~\ref{remark:newton_is_gauss_newton_linear_pde} for details.
\end{remark}

\subsection{Lagrange-Newton}\label{sec:lagrange_newton}
The Lagrange Newton algorithm is a solution method for equality-constrained problems~\cite{hinze2008optimization}. It applies Newton's method to find a critical point of the Lagrangian formulation of the constrained problem. We exemplify its application for the solution of a Poisson equation which yields the recently proposed competitive physics-informed neural networks (CPINNs) formulation~\cite{zeng2022competitive}. Moreover, we demonstrate that for linear PDEs the discretization of the Lagrange-Newton algorithm leads to Competitive Gradient Descent (CGD), see~\cite{schafer2019competitive}. The work~\cite{zeng2022competitive} demonstrates in great detail that the application of CGD to the saddle point formulation of linear PDEs yields state-of-the-art accuracy for PINN type optimization problems.

\paragraph{Problem Formulation}
Given $\Omega\subset\mathbb R^d$, $f\in L^2(\Omega)$, $g\in H^{3/2}(\partial\Omega)$ we consider the constant energy $J(u)=0$ and the following problem 
\begin{align}\label{eq:constrained_poisson}
\begin{split}
    \min_{u\in H^2(\Omega)} J(u) \quad \text{s.t. }
    \begin{cases}
        \Delta u + f & = 0 \quad \text{in }\Omega,
    \\
    u - g & = 0 \quad \text{on }\partial\Omega.
    \end{cases}
\end{split} 
\end{align}
Note that the only feasible point of the above minimization problem is the solution $u^*$ of Poisson's equation $-\Delta u^* = f$ with boundary data $g$. The corresponding Lagrangian functional $\mathcal L\colon H^2(\Omega)\times L^2(\Omega) \times L^2(\partial\Omega) \to \mathbb R$ is given by
\begin{equation}\label{eq:lagrangian_cpinns}
    \mathcal L(u,\lambda, \mu) 
    =
    (\lambda , f + \Delta u)_\Omega
    +
    (\mu, u - g)_{\partial\Omega}.
\end{equation}
The unique saddle point or Nash equilibrium of $\mathcal L$ is $(u^\ast, 0, 0)$ and satisfies
\begin{equation*}
    \max_{\lambda, \mu}\min_{u}
    \Big[ 
    (\lambda, f + \Delta u)_\Omega 
    +
    (\mu, u-g)_{\partial\Omega}
    \Big], 
\end{equation*}
see Appendix~\ref{sec:appendix_proofs_lagrange_newton} for details. 
The minimax formulation above is the CPINN formulation proposed in~\cite{zeng2022competitive} applied to Poisson's equation.

Employing the Lagrange-Newton algorithm, we aim to solve
\begin{equation*}
    D\mathcal L(u, \lambda, \mu) = 0
\end{equation*}
via Newton's method. The unique zero of $D\mathcal L$ is $(u^\ast, 0, 0)$ which corresponds to the sought-after saddle point. As $D\mathcal L$ is linear, Newton's method converges in one step for any initial value $(u_0, \lambda_0, \mu_0)$, hence it holds that 
\begin{equation*}
    (u^*, 0, 0)
    =
    (u_0, \lambda_0, \mu_0)
    +
    (d_{u_0}, d_{\lambda_0}, d_{\Delta \mu_0})
\end{equation*}
where the update direction $(d_{u_0}, d_{\lambda_0}, d_{\mu_0})$ is given by
\begin{equation*}
    -D^2\mathcal L(u_0,\lambda_0, \mu_0)^{-1}[D\mathcal L(u_0,\lambda_0, \mu_0)].
\end{equation*}
Further, for test functions $(\delta_u, \delta_\lambda, \delta_\mu)$ and $(\bar\delta_u, \bar\delta_\lambda, \bar\delta_\mu)$ the Hessian  
\begin{equation*}
    D^2\mathcal L(u_0,\lambda_0, \mu_0)((\delta_u, \delta_\lambda, \delta_\mu),(\bar\delta_u, \bar\delta_\lambda, \bar\delta_\mu))
\end{equation*}
is given via the formula
\begin{align}\label{eq:LN-matrix}
    \begin{pmatrix}
        \delta_u \\ \delta_\lambda \\ \delta_\mu
    \end{pmatrix}^T
    \begin{pmatrix}
         0 & (\cdot, \Delta \cdot)_\Omega & (\cdot, \cdot)_{\partial\Omega}
         \\
         (\cdot, \Delta\cdot)_{\Omega} & 0 & 0
         \\
         (\cdot, \cdot)_{\partial\Omega} & 0 & 0
    \end{pmatrix}
    \begin{pmatrix}
        \bar\delta_u
        \\
        \bar\delta_\lambda
        \\
        \bar\delta_\mu
    \end{pmatrix}.
\end{align}

\paragraph{Neural Network Discretization}
We choose neural networks $u_\theta$, $\lambda_\psi$, $\mu_\xi$ and set as a competitive loss 
\begin{equation*}
    L(\theta, \psi, \xi)
    =
    (\lambda_\psi, \Delta u_\theta + f)_\Omega
    +
    (\mu_\xi, u_\theta - g)_{\partial\Omega}.
\end{equation*}
For given initial parameters $\theta_0, \psi_0, \xi_0$ we discretize the Lagrange-Newton algorithm above in tangent space via inserting $\partial_{\theta_i}u_\theta, \partial_{\psi_i}\lambda_\psi, \partial_{\xi_i}\mu_\xi$ into the Hessian $D^2\mathcal L$ and the derivative $D\mathcal L$, see Appendix~\ref{appendix:saddle_point_discretization} for details. This yields a block matrix $G = G(\theta, \psi, \xi)$ of the form 
\begin{align*}
    G(\theta, \psi, \xi) = 
    \begin{pmatrix}
        0   & A & B \\
        A^T & 0 & 0 \\
        B^T & 0 & 0
    \end{pmatrix},
\end{align*}
where
\begin{equation*}
    A_{ij} = (\partial_{\psi_j}\lambda_\psi, \Delta \partial_{\theta_i}u_\theta)_\Omega, 
    \quad
    B_{ij} = (\partial_{\xi_j}\mu_\xi, \partial_{\theta_i}u_\theta).
\end{equation*}
The algorithm becomes
\begin{equation}
    (\theta, \psi, \xi)_{k+1}
    =
    (\theta, \psi, \xi)_k
    -
    \eta_k
    G_k^\dagger \nabla L(\theta_k,\psi_k, \xi_k),
\end{equation}
where $\eta_k$ is a suitably chosen stepsize where  
damping yields 
\begin{equation*}
    (\theta, \psi, \xi)_{k+1}
    =
    (\theta, \psi, \xi)_k
    -
    \eta_k
    (G_k + \epsilon_k\operatorname{Id})^\dagger \nabla L(\theta_k,\psi_k, \xi_k).
\end{equation*}

\paragraph{Correspondence to Competitive GD}
Adapting the notation, we translate equation (4) of~\cite{schafer2019competitive} to our setting, which describes an iteration of CGD. For a fixed small $\eta>0$, the update direction $(d_\theta, d_\psi, d_\xi)$ is given by
\begin{align*}
    -\begin{pmatrix}
    \operatorname{Id} & \eta^{-1} D^2_{\theta,\psi}L & \eta^{-1} D^2_{\theta,\xi}L 
    \\
    \eta^{-1} D^2_{\psi\theta}L & \operatorname{Id} & \eta^{-1} D^2_{\psi,\xi}L
    \\
    \eta^{-1} D^2_{\xi,\theta}L & \eta^{-1} D^2_{\xi,\psi}L & \operatorname{Id}
    \end{pmatrix}^{-1}
    \begin{pmatrix}
        \nabla_\theta L
        \\
        \nabla_\psi L
        \\
        \nabla_\xi L
    \end{pmatrix}
\end{align*}
and this matrix coincides precisely with $\eta^{-1} G + \operatorname{Id}$, for which we provide the elementary computations in the appendix~\ref{sec:appendix_proofs_lagrange_newton}. In compact notation, this corresponds to 
\begin{align*}
    (d_\theta, d_\psi, d_\xi)
    &=
    (\eta^{-1} G + \operatorname{Id})^{-1}DL
    \\
    &=
    \eta [G + \eta\operatorname{Id}]^{-1}DL.
\end{align*}
This means it is precisely a discretized Lagrange-Newton method with damping. 

\begin{remark} The connection between Competitive Gradient Descent on parameter space and Lagrange-Newton in function space can be extended to general \emph{linear} PDEs. However, it does not extend to the nonlinear case, which leads to a non-vanishing first diagonal entry in the matrix~\eqref{eq:LN-matrix}. 
\end{remark}

\subsection{Gauss-Newton}\label{sec:gauss_newton_f_space}
We discuss a function space version of the Gauss-Newton algorithm for the solution of nonlinear least-squares problems. 
We exemplify the algorithm using a PINN-type formulation for the Navier-Stokes equations. 
We show that Gauss-Newton in function space leads to Gauss-Newton in parameter space. 
This approach was proposed in~\cite{anonymous2024} for the solution of the Navier-Stokes equations with neural network ansatz. 
There it is shown that it yields state of the art accuracy for neural network approximations of solutions of the Navier-Stokes equations. 

\paragraph{Problem Formulation} For $\Omega\subset\mathbb R^d$ and forcing $f$, the steady-state Navier-Stokes equations in velocity-pressure formulation are given by
\begin{align}
    \begin{split}\label{eq:navier_stokes}
        -\Delta u + (u\cdot\nabla)u + \nabla p &= f \quad \text{in }\Omega
        \\
        \operatorname{div}u &= 0 \quad \text{in }\Omega,
    \end{split}
\end{align}
with suitable boundary conditions. We reformulate~\eqref{eq:navier_stokes} in least-squares, i.e., PINN form and assume -- for brevity of presentation mainly -- that the ansatz space for the velocity consists of divergence-free functions that satisfy the desired boundary conditions.\footnote{Modifying neural network ansatz spaces to fulfill these requirements is not uncommon, see~\cite{sukumar2022exact, richter2022neural}} This yields
\begin{equation}\label{eq:pinn_navier_stokes}
    E(u,p) = 
    \frac12 \| R(u,p) \|^2_{L^2(\Omega)^d},
\end{equation} 
for a suitably defined nonlinear residual $R$.
Note that the problem is neither quadratic nor convex in $(u,p)$. 
Here, we choose Gauß-Newton as a function space algorithm as we are facing a least squares problem. 
The Gauss-Newton algorithm in function space linearizes $R$ at the current iterate and explicitly solves the resulting quadratic problem, see also~\cite{dennis1996numerical}. Following this strategy, we obtain
\begin{equation*}
    (u_{k+1}, p_{k+1}) = (u_{k}, p_{k}) - T_k^{-1}[DE(u_k, p_k)],
\end{equation*}
where $T_k$ is given by\footnote{More precisely this means that $\langle T_k(\delta_u, \delta_p), (\bar\delta_u, \bar\delta_p) \rangle$ equals $(DR(u_k, p_k)(\delta_u, \delta_p), DR(u_k, p_k)(\bar\delta_u, \bar\delta_p))_{L^2(\Omega)^d}$.}
\begin{equation*}
    T_k = DR(u_k, p_k)^*DR(u_k, p_k).
\end{equation*}

\paragraph{Neural Network Discretization}
We choose neural networks $u_\theta$ and $p_\psi$ as an ansatz for the velocity and the pressure and denote the PINN loss by
\begin{equation*}
    L(\theta, \psi)
    =
    \frac12 \| -\Delta u_\theta + (u_\theta\cdot\nabla)u_\theta + \nabla p_\psi - f \|^2_{L^2(\Omega)^d}.
\end{equation*}
Next, we discretize $DR(u_\theta, p_\psi)^*DR(u_\theta, p_\psi)$ in the tangent space of the neural network ansatz following the abstract framework. This yields a block-matrix 
\begin{align*}
    G = G(\theta,\psi)
    =
    \begin{pmatrix}
        A & B \\
        B^T & C
    \end{pmatrix}.
\end{align*}
Setting $\xi_i = \Delta \partial_{\theta_i}u_\theta + (u_\theta\cdot\nabla)\partial_{\theta_i}u_\theta + (\partial_{\theta_i}u_\theta \cdot \nabla)u_\theta$ as an abbreviation, we have
\begin{equation}\label{eq:gnng_A_and_B}
    A_{ij}
    =
    (\xi_i, \xi_j)_{L^2(\Omega)^d}, \quad B_{ij} = (\xi_i, \nabla \partial_{\psi_j}p_\psi)_{L^2(\Omega)^d}
\end{equation}
and 
\begin{equation}\label{eq:gnng_C}
    C_{ij} = (\nabla \partial_{\psi_i}p_\psi,\nabla \partial_{\psi_j}p_\psi)_{L^2(\Omega)^d}.
\end{equation}
Using the above computations, the algorithm in parameter space becomes
\begin{align*}
    \begin{pmatrix}
        \theta_{k+1} \\ \psi_{k+1}
    \end{pmatrix}
    =
    \begin{pmatrix}
        \theta_{k} \\ \psi_{k}
    \end{pmatrix}
    -
    \eta_kG(\theta_k,\psi_k)^\dagger\nabla L(\theta_k,\psi_k).
\end{align*}

\paragraph{Correspondence to Gauss-Newton in Parameter Space}
We show that Gauss-Newton in parameter space and function space coincide, given a suitable integral discretization. For quadrature points $(x_i)_{i=1,\dots,N}$ in $\Omega$ we define the discrete residual $r\colon(\theta,\psi)\to\mathbb R^N$ to be
\begin{align*}
    \frac{|\Omega|}{\sqrt N}
    \begin{pmatrix}
        (-\Delta u_\theta + (u_\theta\cdot\nabla)u_\theta + \nabla p_\psi - f)(x_1) \\
        \vdots \\
        (-\Delta u_\theta + (u_\theta\cdot\nabla)u_\theta + \nabla p_\psi - f)(x_N)
    \end{pmatrix}.
\end{align*}
The discretized PINN formulation of~\eqref{eq:navier_stokes} reads
\begin{equation}\label{eq:navier_stokes_pinn_discrete}
    \min L(\theta,\psi) = \frac12\|r(\theta,\psi)\|^2_{l^2}.
\end{equation}
It is straight-forward to see that applying Gauss-Newton to~\eqref{eq:navier_stokes_pinn_discrete} yields the matrix $G = G(\theta,\psi)$ -- if in the discretization of the integrals in the matrices~\eqref{eq:gnng_A_and_B} and~\eqref{eq:gnng_C} the same quadrature points as in the definition of $r$ are being used. 
We provide the details in Appendix~\ref{appendix:details_gauss_newton}. 

\begin{remark}
    The correspondence between Gauss-Newton in function space and its counterpart in parameter space holds for general nonlinear least-squares problems, not only for the Navier-Stokes equations, see Appendix~\ref{appendix:details_gauss_newton}. 
\end{remark}

\subsection{Scalability}\label{sec:scalability}
Function-space inspired algorithms like those discussed in the previous section yield \rebuttal{require the solution of a large system of linear equations of the size of the parameters $d_\Theta$, which is intractable in high parameter dimension $d_\Theta$.}
Since this is a critical aspect we briefly review matrix-free second-order optimization as discussed in~\cite{schraudolph2002fast}, and the K-FAC approach~\cite{martens2015optimizing} that allows to efficiently compute approximate inverses of $G(\theta_k)$.

\paragraph{Matrix-Free Second-Order Optimization}
In certain situations, the matrix $G(\theta)$ allows for the computation of matrix-vector products $v\mapsto G(\theta)v$ without the need for matrix assembly and storage at a comparable computational cost to the gradient $\nabla L(\theta)$. 
With matrix-vector products available one can resort to iterative linear solvers that only require matrix-vector products, such as the CG or GMRES methods~\cite{trefethen2022numerical}. For example, matrix-vector products are available for PINN-type loss functions of linear PDEs that are optimized using Newton's method in function space. In this setting, the resulting optimization in parameter space is the Gauss-Newton method, see also Remark~\ref{remark:engd_is_gn_sometimes} and Remark~\ref{remark:newton_is_gauss_newton_linear_pde}. More precisely, it holds $G(\theta) = J^T(\theta)\cdot J(\theta)$, where $J(\theta)$ is the Jacobian of a suitably scaled residual $r$. Then, Jacobian-vector products and vector-Jacobian products can be efficiently computed using automatic differentiation, relying on a combination of forward and backward modes. The matrix-vector product $G(\theta)\cdot v$ for a given vector $v\in\mathbb R^p$ can be computed as
\begin{equation*}
    w = J(\theta)v, \quad G(\theta)v = (w^T\cdot J(\theta))^T.
\end{equation*}
Further details on matrix-free optimization methods can be found in~\cite{schraudolph2002fast} and a successful application is demonstrated in~\cite{zeng2022competitive}. 

\paragraph{K-FAC}
Kronecker-Factorized Approximate Curvature (K-FAC) is used to approximate the Fisher information matrix or Gauß-Newton matrix~\cite{martens2015optimizing, eschenhagen2024kronecker}. 
The approach approximates the matrix $G(\theta)$ as the Kronecker product of much smaller matrices $G(\theta) \approx A(\theta) \otimes B(\theta)$, which by the properties of the Kronecker product yields $G(\theta)^{-1} \approx A(\theta)^{-1} \otimes B(\theta)^{-1}$.
The Kronecker structure stems from the linear layers in the neural network ansatz making the concrete K-FAC approximation dependent on the structure of the ansatz set. K-FAC is the state-of-the-art method for optimization in neural network based variational Monte Carlo methods~\cite{pfau2020ab, li2023forward, scherbela2023variational}.

\section{Conclusion \rebuttal{and Outlook}}\label{sec:conclusion}%
\rebuttal{We consider a variety of problems in scientific machine learning for which optimization is arguably the biggest challenge and no principled way or commonly accepted best-practices for optimization exists.}
We provide a principled way to transfer infinite dimensional optimization algorithms to nonlinear neural network ansatz classes which follows the paradigm of \emph{first optimize, then discretize}.
\def\arraystretch{1.1}
\begin{table}
\begin{center}
\begin{tabular}{|c|c|c|}
    \hline
    Function Space & Parameter Space & Name  \\
    \hline
    Gradient Descent & NGD & NGD  \\
    Newton & GGN & ENGD  \\
    Lagrange-Newton & CGD & CPINNs \\
    Gauss-Newton & GN & GNNGD  \\
    \hline
\end{tabular}
\caption{Translation of optimization algorithms; here NGD stands for natural gradient descent, GN for Gauss-Newton, GGN for generalized Gauss-Newton, ENGD for energy natural gradient descent~\cite{pmlr-v202-muller23b}, GNNGD for Gauss-Newton natural gradient descent~\cite{anonymous2024}, and CPINNs for competitive PINNs~\cite{schafer2019competitive}. }\label{table:translation_algorithms}
\end{center}
\end{table}
Here, it is the idea to first choose an algorithm in function space that is well aligned with the problem and then to discretize is using neural networks. 
We show that this approach offers a unified view on many state-of-the-art optimization routines currently employed in SciML, see Table~\ref{table:translation_algorithms}. 
\rebuttal{
This leads us to the following conclusions:
\begin{itemize}
    \item Function-space inspired optimization is currently underdeveloped in the field of scientific machine learning. 
    \item The Function-space perspective yields principled way to design problem-specific optimization algorithms in SciML.
    This has the potential to greatly improve the performance on many current SciML tasks. 
\end{itemize}
}
\rebuttal{
Based on our discussion, we propose the following program for the development of efficient function space-algorithms in scientific machine learning: 
\begin{itemize}
    \item Design function-space inspired methods for more SciML problems, in particular this requrires understanding what an appropriate function-space algorithm is for a given problem at hand. 
    \item Provide fast implementations of function-space inspired algorithms. Note that the methods developed in the other contexts can usually not be applied as the function-space geometries in SciML often incorporate PDE specific terms. 
\end{itemize}
}

\section*{Impact Statement}
This paper presents work whose goal is to advance the field of Machine Learning. There are many potential societal consequences of our work, none which we feel must be specifically highlighted here. 

\section*{Acknowledgements}
JM acknowledges funding by the Deutsche Forschungsgemeinschaft (DFG, German Research Foundation) under the project number 442047500 through the Collaborative Research Center \emph{Sparsity and Singular Structures} (SFB 1481). 

\bibliography{example_paper}
\bibliographystyle{icml2024}

\newpage
\appendix
\onecolumn

\section{Details Abstract Framework}
For now, we will work under the assumption that the map $T_u$ is symmetric and positive semi-definite. 
We denote the inner product induced by $T$ by 
\[ [w_1,w_2]_{T_u} \coloneqq \langle T_u w_1, w_2 \rangle \]
and the generalized norm by $\lVert w \rVert_{T_{u}}^2 = [w, w]_{T_{u}}$.

\subsection{Derivation of Normal Equations of~\eqref{eq:fitting_the_update_direction}}\label{appendix:proof_best_fit}
\begin{lemma}
    We consider a differentiable parametrization $P\colon\Theta\to\mathcal H$ and $d_k = T_{u_{\theta_k}}^{-1}(DE(u_{\theta_k}))\in \mathcal H$, where we 
    assume $T_{u_{\theta_k}}\colon\mathcal H\to\mathcal H^\ast$ to be symmetric, linear and bounded. Then it holds that 
    \begin{equation}\label{eq:appendix_best_fit}
        w_k\in\arg\min \frac12\lVert DP(\theta_k)w- d_k \rVert_{T_{u_{\theta_k}}}^2,
    \end{equation}
    if and only if 
    \begin{align*}
        G(\theta_k) w_k = - \nabla L(\theta_k),
    \end{align*}
    where 
    \begin{align*}
        G(\theta)_{ij} = \langle T_{u_{\theta_k}}\partial_{\theta_i}u_{\theta_k}, \partial_{\theta_j}u_\theta \rangle.
    \end{align*}
\end{lemma}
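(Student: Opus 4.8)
The plan is to treat \eqref{eq:appendix_best_fit} as an ordinary finite-dimensional least-squares problem in $w\in\mathbb R^p$ and to characterize its minimizers through the vanishing of the first derivative. Abbreviate $J\coloneqq DP(\theta_k)\colon\mathbb R^p\to\mathcal H$ (the linear map whose $i$-th column is $\partial_{\theta_i}u_{\theta_k}$) and $T\coloneqq T_{u_{\theta_k}}$. The objective $\phi(w)=\tfrac12\lVert Jw-d_k\rVert_T^2=\tfrac12\langle T(Jw-d_k),Jw-d_k\rangle$ is a quadratic form in $w$ whose leading part $w\mapsto\langle TJw,Jw\rangle$ is positive semi-definite, because $T$ is symmetric positive semi-definite. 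Hence $\phi$ is convex, and for convex functions a point is a global minimizer if and only if the directional derivative vanishes there. So the whole argument reduces to computing $D\phi(w)[v]$ for all $v\in\mathbb R^p$ and rewriting the condition $D\phi(w)=0$ in the claimed form.

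First I would expand $\phi$ using the symmetry $\langle Ta,b\rangle=\langle Tb,a\rangle$, obtaining $\phi(w)=\tfrac12\langle TJw,Jw\rangle-\langle TJw,d_k\rangle+\tfrac12\langle Td_k,d_k\rangle$; boundedness of $T$ and $d_k\in\mathcal H$ guarantee every pairing is finite. Differentiating in an arbitrary direction $v$ and using symmetry once more gives $D\phi(w)[v]=\langle T(Jw-d_k),Jv\rangle$. Setting this to zero for every $v$ is therefore equivalent to $\langle T(Jw-d_k),Jv\rangle=0$ for all $v\in\mathbb R^p$, i.e. the residual $Jw-d_k$ is $T$-orthogonal to the tangent space $\operatorname{ran}(J)$.

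The remaining step is to read off this orthogonality componentwise and to connect it to $G$ and $\nabla L$. Taking $v=e_j$, so that $Jv=\partial_{\theta_j}u_{\theta_k}$, the left-hand side becomes $\langle TJw,\partial_{\theta_j}u_{\theta_k}\rangle=\sum_i w_i\langle T\partial_{\theta_i}u_{\theta_k},\partial_{\theta_j}u_{\theta_k}\rangle=(G(\theta_k)w)_j$, where I use the definition of $G$ together with its symmetry $G_{ij}=G_{ji}$ inherited from $T$. For the data term I would use that $d_k=T^{-1}(DE(u_{\theta_k}))$ implies $Td_k=DE(u_{\theta_k})$, so $\langle Td_k,\partial_{\theta_j}u_{\theta_k}\rangle=DE(u_{\theta_k})[\partial_{\theta_j}u_{\theta_k}]$, which by the chain rule applied to $L=E\circ P$ equals $\partial_{\theta_j}L(\theta_k)=\nabla L(\theta_k)_j$. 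Combining the two identities, the stationarity condition is exactly $G(\theta_k)w=\nabla L(\theta_k)$, the overall sign matching the convention $d_k=-T^{-1}(DE(u_{\theta_k}))$ of~\eqref{eq:gradient_based_update}, which produces the $-\nabla L(\theta_k)$ on the right. Since $\phi$ is convex this equation is equivalent to $w$ minimizing \eqref{eq:appendix_best_fit}, completing both directions of the iff.

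The main obstacle I anticipate is bookkeeping rather than depth: because $T$ is only positive \emph{semi}-definite and the $\partial_{\theta_i}u_{\theta_k}$ may be linearly dependent, $\phi$ is convex but not strictly convex, so $G(\theta_k)$ is generally singular and the minimizer is not unique. The argument must therefore rest on the first-order characterization of minimizers of a convex quadratic rather than on invertibility; this is precisely what makes the normal equations consistent yet leaves a whole affine solution set, consistent with the appearance of the pseudo-inverse $G(\theta_k)^\dagger$ in \eqref{eq:gramian_update_direction} and \eqref{eq:discretized_algorithm}. I would also take care that all manipulations with the dual pairing $\langle T\cdot,\cdot\rangle$ are justified by boundedness of $T$ and $d_k\in\mathcal H$, and reconcile the stated sign of $d_k$ with \eqref{eq:gradient_based_update}.
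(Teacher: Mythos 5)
Your proof is correct and follows essentially the same route as the paper's: expand the $T$-weighted objective into the convex quadratic $\tfrac12 w^\top G(\theta_k)w + w^\top\nabla L(\theta_k)$ (up to a constant), identify $G$ and $\nabla L$ via the definition of the Gramian and the chain rule $\partial_{\theta_i}L(\theta)=DE(u_\theta)\partial_{\theta_i}u_\theta$, and characterize minimizers by the vanishing first derivative, i.e.\ the normal equations. Your explicit handling of the semi-definite (non-strictly-convex) case and of the sign discrepancy between the lemma statement and~\eqref{eq:gradient_based_update} is slightly more careful than the paper's, which silently uses $d_k=-T_{u_{\theta_k}}^{-1}(DE(u_{\theta_k}))$, but the substance is identical.
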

\begin{proof}
    The right-hand side in~\eqref{eq:appendix_best_fit} is up to the constant term $\frac12\lVert d_k \rVert_{T_{u_{\theta_k}}}^2$ 
    given by 
    \begin{align}\label{eq:aux_loss}
        \ell(w) \coloneqq \frac12 \lVert DP(\theta_k) w\rVert _{T_{u_{\theta_k}}}^2  -  [DP(\theta_k) w, d_k]_{T_{u_{\theta_k}}}. 
    \end{align}
    Using $DP(\theta)w = \sum_i w_i \partial_{\theta_i}u_\theta$ and the definition of  $[\cdot, \cdot]_{T_u}$, the first term of~\eqref{eq:aux_loss} takes the form 
    \begin{align*}
    \frac12\sum_{i,j} w_i w_j \langle T_{u_{\theta_k}}\partial_{\theta_i}u_{\theta_k}, \partial_{\theta_j}u_{\theta_k} \rangle  = \frac12 w^\top G(\theta_k) w,
    \end{align*}
    where 
        $G(\theta)_{ij} = \langle T_{u_{\theta_k}}\partial_{\theta_i}u_{\theta_k}, \partial_{\theta_j}u_\theta \rangle$. 
    The second term in~\eqref{eq:aux_loss} amounts to 
    \begin{align*}
        -\sum_{i}\langle w_i\partial_{\theta_i}u_{\theta_k} , T_{u_{\theta_k}}d_k\rangle & = \sum_{i}\langle\partial_{\theta_i}u_{\theta_k}, DE(u_{\theta_k})\rangle 
        = w^\top \nabla L(\theta_k),
    \end{align*}
    where we used $d_k = - T_{u_{\theta_k}}^{-1} (DE(u_{\theta_k}))$ and the chain rule $\partial_{\theta_i}L(\theta) = DE(u_{\theta}) \partial_{\theta_i} u_\theta$. 
    Overall, this yields 
    \begin{align*}
        \ell(w) = \frac12w^\top G(\theta_k) w + w^\top \nabla L(\theta_k),
    \end{align*}
    where the optimizers of this function are characterized by 
    \begin{align*}
        0 = \nabla \ell(w) = G(\theta_k)w + \nabla L(\theta_k).
    \end{align*}
\end{proof}


\subsection{Proof of Theorem~\ref{thm:projection_theorem}}\label{sec:appendix_update_direction}

\projectionthm*
\begin{proof}
This is a direct consequence of~\eqref{eq:appendix_best_fit} as the best approximation is given by the orthogonal projection. 
\end{proof}

\subsection{Extension to Non-Symmetric and Indefinite $T_u$}\label{sec:extension_to_non_symmetric}

Recall that we consider an iterative algorithm in a Hilbert space $\mathcal H$ of the form  
\begin{equation*}
    u_{k+1} = u_k + \eta_k d_k, \quad \text{with }d_k = T_{u_k}^{-1}(DE(u_k)).
\end{equation*}
Here, $T_{u_k}:\mathcal H \to \mathcal H^*$ is a continuous, linear, and bijective map, $E:\mathcal H\to \mathbb R$ is a function whose extrema or saddle points we aim to find. The update direction $d_k$ satsifies the equation
\begin{equation*}
    \langle T_{u_k} d_k, w \rangle
    =
    \langle DE(u_k), w \rangle 
    \quad
    \text{for all }
    w\in\mathcal H.
\end{equation*}
Now we again introduce a neural network discretization of this algorithm and assume that we are at step $k$ with parameters $\theta_k$ and the corresponding function $u_{\theta_k}$. The discretization in tangent space and the solution of equation~\eqref{eq:gramian_update_direction} is nothing but replacing the space of test functions by the tangent space of the neural network ansatz at $\theta_k$, i.e., with $w_k = G(\theta_k)^\dagger\nabla L(\theta_k)$ and $d_{\theta_k} = DP(\theta_k)w_k$ we have
\begin{equation*}
    \langle T_{u_{\theta_k}} d_{\theta_k}, w \rangle 
    =
    \langle DE(u_{\theta_k}), w \rangle
    \quad
    \text{for all }
    w\in T_{u_{\theta_k}}\mathcal M.   
\end{equation*}
Here $T_{u_{\theta_k}}\mathcal M$ is the tangent space of the neural network ansatz at $u_{\theta_k}$, i.e., $\operatorname{span}\{ \partial_{\theta_1}u_{\theta_k}, \dots \partial_{\theta_p}u_{\theta_k} \}$. For general invertible, but possibly non-symmetric and indefinite $T_{u_{\theta_k}}$, we are interested in quasi-best approximation results of the form
\begin{equation}\label{eq:generalized_cea}
    \| d_k - d_{\theta_k} \|_{\mathcal H}
    \leq 
    C\cdot
    \inf_{d\in T_{u_{\theta_k}}\mathcal M}\| d_k - d \|_{\mathcal H}.
\end{equation}
Such an estimate satisfies the best approximation property of an orthogonal projection up to a constant $C$\footnote{This constant should ideally not depend on critical parameters.} and thus guarantees that the function space update directions $d_k$ are closely matched. Results like~\eqref{eq:generalized_cea} are well-known in the finite element literature and hold if certain inf-sup conditions are satisfied both on the continuous and the discrete level. We refer to~\cite{xu2003some, boffi2013mixed} for an introduction. Note that the verification depends on the properties on the infinite-dimensional level \emph{and} the concrete structure of the discrete ansatz spaces which is not the case for symmetric and positive definite operators.

\section{Discretization of Saddle Point Problems}\label{appendix:saddle_point_discretization}
We recall our notation for saddle point problems. Given a map 
\begin{equation*}
    \mathcal L:\mathcal H \times \mathcal V \to \mathbb R, \quad (u,v)\mapsto \mathcal L(u,v)
\end{equation*}
we are looking for a saddle point $(u^*,v^*)$. For their solution we employ an iterative algorithm of the form
\begin{equation*}
    (u_{k+1}, v_{k+1}) = (u_{k}, v_{k}) + (d_{u_{k}}, d_{v_{k}}).
\end{equation*}
We assume that $\mathcal L$ is Fr\'echet differentiable and the update direction is given by 
\begin{equation*}
    (d_{u_{k}}, d_{v_{k}}) = T_{u_k,v_k}^{-1}(D\mathcal L(u,v)).
\end{equation*}
Here, 
\begin{equation*}
 T_{u_k,v_k}:\mathcal H\times\mathcal V \to \mathcal H^*\times\mathcal V^*   
\end{equation*}
is a bounded, linear, and invertible map. Recall our notation for the competitive loss $L$
\begin{equation*}
    L:\Theta \times \Psi \to \mathbb R, \quad L(\theta, \psi) = \mathcal L(u_\theta, v_\psi),
\end{equation*}
where $u_\theta$ and $v_\psi$ are two neural networks with parameter spaces $\Theta$ and $\Psi$, respectively. We then employ
\begin{equation*}
    \{(\partial_{\theta_i}u_\theta,0)\}_{i=1,\dots,p_\Theta}
    \quad \textrm{and}\quad
    \{(0,\partial_{\psi_i}v_\psi)\}_{i=1,\dots,p_\Psi} 
\end{equation*}
for the discretization in the neural network's tangent space. 
To discretize the linear map $T = T_{u_\theta,v_\psi}$, note that we can write it in block structure
\begin{align*}
    T 
    =
    \begin{pmatrix}
        T^1 & T^2 \\
        T^3 & T^4
    \end{pmatrix}
\end{align*}
with $T_1:\mathcal H\to\mathcal H^*$, $T_2:\mathcal V \to \mathcal H^*$, $T_3:\mathcal H \to \mathcal V^*$, and $T_4: \mathcal V \to \mathcal V^*$. The corresponding matrix $G=G(\theta, \psi)$ inherits this block structure
\begin{align*}
    G 
    =
    \begin{pmatrix}
        G^1 & G^2 \\
        G^3 & G^4
    \end{pmatrix}
\end{align*}
and it holds 
\begin{equation*}
    G^1_{ij} = \langle T_1 \partial_{\theta_i}u_\theta, \partial_{\theta_j}u_\theta \rangle_{\mathcal H}, 
    \quad 
    G^2_{ij} = \langle T_2 \partial_{\psi_j}v_\psi, \partial_{\theta_i}u_\theta \rangle_{\mathcal H}
\end{equation*}
and
\begin{equation*}
    G^3_{ij} = \langle T_3 \partial_{\theta_j}u_\theta, \partial_{\psi_i}v_\psi \rangle_{\mathcal V}, 
    \quad 
    G^2_{ij} = \langle T_4 \partial_{\psi_i}v_\psi, \partial_{\psi_j}v_\psi \rangle_{\mathcal V}.
\end{equation*}
For the derivative of $\mathcal L$ note that the chain rule implies
\begin{equation*}
    (D_u\mathcal L(u_\theta,v_\psi)(\partial_{\theta_i}u_\theta), D_v\mathcal L(u_\theta,v_\psi)(\partial_{\psi_j}v_\psi)) 
    =
    (\nabla_\theta L(\theta,\psi)_i, \nabla_\psi L(\theta,\psi)_j),
\end{equation*}
i.e., corresponds to the gradient of the function $L$.
The algorithm in parameter space with the additional introduction of a step size $\eta_k> 0$ reads
\begin{equation*}
    (\theta_{k+1}, \psi_{k+1})
    =
    (\theta_k, \psi_k)
    -
    \eta_k
    G_k^\dagger \nabla L(\theta_k,\psi_k),
\end{equation*}
A typical damping strategy consists of adding an $\epsilon$-scaled identity to $G$ and reads as 
\begin{equation*}
    (\theta_{k+1}, \psi_{k+1})
    =
    (\theta_k, \psi_k)
    -
    \eta_k
    (G_k + \epsilon_k\operatorname{Id})^\dagger \nabla L(\theta_k,\psi_k).
\end{equation*}

\begin{remark}[Interpretation of Update Direction]
    A similar projection result to the one provided for minimization problems can obtained for discretized saddle point problems \emph{under appropriate assumtions on $T$}, see Appendix~\ref{sec:extension_to_non_symmetric}.
\end{remark}

\section{Extended Examples}\label{sec:appendix_proofs}
This Appendix provides detailed proofs for omitted details in Section~\ref{sec:concrete_examples}.
\subsection{Details for Section~\ref{sec:lagrange_newton} on Lagrange-Newton Methods}\label{sec:appendix_proofs_lagrange_newton}

Recall that we aim to solve
\begin{align}\label{appendix:solution_constrained_problem}
\begin{split}
    \min_{u\in H^2(\Omega)} J(u) = 0 \quad \text{s.t. }
    \begin{cases}
        \Delta u + f & = 0 \quad \text{in }\Omega,
    \\
    u - g & = 0 \quad \text{on }\partial\Omega.
    \end{cases}
\end{split} 
\end{align}
Given $\Omega\subset\mathbb R^d$ open and bounded with a $C^{1,1}$ boundary, $f\in L^2(\Omega)$, and $g\in H^{3/2}(\partial\Omega)$. Note that elliptic regularity theory~\cite{grisvard2011elliptic} guarantees the existence of a solution $u^*\in H^2(\Omega)$ to this problem.

\paragraph{Equivalence of Saddle Points and Critical Points of the Lagrangian}
We now considering the Lagrangian of the constrained minimization problem which is given by
\begin{equation}
    \mathcal L\colon H^2(\Omega)\times L^2(\Omega) \times L^2(\partial\Omega) \to \mathbb R,
    \quad
    \mathcal L(u,\lambda, \mu) 
    =
    (\lambda , f + \Delta u)_\Omega
    +
    (\mu, u - g)_{\partial\Omega}.
\end{equation}
As $\mathcal L$ is a sum of continuous bilinear forms, it is Fr\'echet differentiable with derivative
\begin{equation}\label{eq:derivative_L_Lagrange_Newton}
    D\mathcal L(u,\lambda, \mu)((\delta_u, \delta_\lambda, \delta_\mu))
    =
    ((\lambda, \Delta \delta_u)_\Omega + (\mu,\delta_u)_{\partial\Omega}, (\delta_\lambda, f + \Delta u)_\Omega, (\delta_\mu, u-g)_{\partial\Omega}).
\end{equation}
Recall that a saddle point -- or in this case equivalently a Nash equilibrium -- of $\mathcal L$ is a triplet $(u^*, \lambda^*, \mu^*)$ that satisfies 
\begin{equation}\label{appendix:def_saddle_point}
    \mathcal L(u^*, \lambda, \mu) \leq \mathcal L(u^*, \lambda^*, \mu^*) \leq \mathcal L(u, \lambda^*, \mu^*), \quad \forall u, \lambda, \mu \in H^2(\Omega)\times L^2(\Omega) \times L^2(\partial\Omega).
\end{equation}
It is well-known, see for instance~\cite{zeidler2012applied} Theorem 2.F, that a saddle point $(u^*, \lambda^*, \mu^*)$ satisfies the minimax problem
\begin{equation*}
    \mathcal L(u^*, \lambda^*, \mu^*)
    =
    \max_{\lambda, \mu}\min_{u} \mathcal L(u,\lambda, \mu)
    =
    \min_{u}\max_{\lambda, \mu} \mathcal L(u,\lambda, \mu)
    =
    \min_{u}\max_{\lambda, \mu}
    \Big[ 
    (\lambda, f + \Delta u)_\Omega 
    +
    (\mu, u-g)_{\partial\Omega}
    \Big].
\end{equation*}
The minimax formulation above is the CPINN formulation proposed in~\cite{zeng2022competitive} applied to Poisson's equation.

\begin{lemma}
    The unique saddle point of $\mathcal L$ is $(u^*,0,0)$, where $u^*\in H^2(\Omega)$ denotes the solution to~\eqref{appendix:solution_constrained_problem}. Furthermore, the triplet $(u^*, 0, 0)$ is the unique zero of $D\mathcal L$.
\end{lemma}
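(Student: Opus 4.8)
The plan is to prove the second assertion first, since a characterization of the critical points of $\mathcal L$ delivers uniqueness of the saddle point almost for free. Starting from the explicit formula~\eqref{eq:derivative_L_Lagrange_Newton} for $D\mathcal L$, I would set each of its three components to zero against arbitrary test directions. Testing the third component against all $\delta_\mu\in L^2(\partial\Omega)$ forces $u-g=0$ on $\partial\Omega$, and testing the second against all $\delta_\lambda\in L^2(\Omega)$ forces $f+\Delta u=0$ in $\Omega$. Together these are exactly the constraints in~\eqref{appendix:solution_constrained_problem}, so $u=u^*$ by the uniqueness of the solution guaranteed by elliptic regularity.

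The remaining and most delicate step is to deduce $\lambda=0$ and $\mu=0$ from the first component, i.e.\ from $(\lambda,\Delta\delta_u)_\Omega+(\mu,\delta_u)_{\partial\Omega}=0$ for all $\delta_u\in H^2(\Omega)$. I would rephrase this as the statement that $(\lambda,\mu)$ is orthogonal, in $L^2(\Omega)\times L^2(\partial\Omega)$, to the range of the linear map $S\colon\delta_u\mapsto(\Delta\delta_u,\operatorname{tr}\delta_u)$. The crux is then to show this range is dense: for a $C^{1,1}$ boundary and any $(\phi,\eta)\in L^2(\Omega)\times H^{3/2}(\partial\Omega)$, the Dirichlet problem $\Delta\delta_u=\phi$ in $\Omega$ with $\delta_u|_{\partial\Omega}=\eta$ admits a solution $\delta_u\in H^2(\Omega)$ by $H^2$-elliptic regularity, so $\operatorname{ran}(S)\supseteq L^2(\Omega)\times H^{3/2}(\partial\Omega)$, which is dense because $H^{3/2}(\partial\Omega)$ is dense in $L^2(\partial\Omega)$. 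Orthogonality to a dense set forces $(\lambda,\mu)=(0,0)$, establishing that $(u^*,0,0)$ is the unique zero of $D\mathcal L$.

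For the saddle-point claim I would first verify directly that $(u^*,0,0)$ satisfies~\eqref{appendix:def_saddle_point}. Since both residuals vanish at $u^*$, we have $\mathcal L(u^*,\lambda,\mu)=0$ for every $\lambda,\mu$, and since the multipliers are zero we have $\mathcal L(u,0,0)=0$ for every $u$; both coincide with $\mathcal L(u^*,0,0)=0$, so the defining chain reduces to $0\le 0\le 0$ and holds trivially. For uniqueness, I would note that any saddle point $(u^\star,\lambda^\star,\mu^\star)$ is necessarily a critical point of $\mathcal L$: the inequalities say that $(\lambda^\star,\mu^\star)$ maximizes the unconstrained, Fr\'echet-differentiable map $(\lambda,\mu)\mapsto\mathcal L(u^\star,\lambda,\mu)$ and that $u^\star$ minimizes $u\mapsto\mathcal L(u,\lambda^\star,\mu^\star)$, so all partial derivatives vanish and $D\mathcal L=0$ there. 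By the first part, this forces $(u^\star,\lambda^\star,\mu^\star)=(u^*,0,0)$.

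The main obstacle is the density argument in the second paragraph: everything else is bookkeeping with the explicit derivative and a trivial evaluation of $\mathcal L$, but pinning down $\lambda=0$ and $\mu=0$ genuinely requires the surjectivity of the Dirichlet solution map onto $L^2(\Omega)\times H^{3/2}(\partial\Omega)$, that is, full $H^2$-elliptic regularity on the $C^{1,1}$ domain, together with the density of $H^{3/2}(\partial\Omega)$ in $L^2(\partial\Omega)$.
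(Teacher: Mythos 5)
Your proof is correct, and it is organized differently from the paper's. The shared analytic core is the same in both arguments: $H^2$-solvability of the Dirichlet problem $\Delta\delta_u=\phi$, $\delta_u|_{\partial\Omega}=\eta$ on the $C^{1,1}$ domain. But you deploy it differently. For the vanishing of the multipliers, you phrase the first-order condition as orthogonality of $(\lambda,\mu)$ to the range of $S\colon\delta_u\mapsto(\Delta\delta_u,\operatorname{tr}\delta_u)$ and conclude from density of $L^2(\Omega)\times H^{3/2}(\partial\Omega)$ in $L^2(\Omega)\times L^2(\partial\Omega)$; the paper instead tests with the solution of the auxiliary problem with boundary datum $\mu$ itself, which forces a two-case analysis (first $\mu\in H^{3/2}(\partial\Omega)$, then an approximation step with $\mu_\epsilon$ for general $\mu\in L^2(\partial\Omega)$) that your density formulation absorbs in one stroke. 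For uniqueness of the saddle point, the logical routes genuinely diverge: you prove the critical-point classification first and then reduce, observing that any saddle point is an unconstrained min/max in each block and hence a zero of $D\mathcal L$; the paper argues directly from the defining inequalities, using that $\mathcal L(u^{**},\cdot,\cdot)$ would be unbounded above unless the constraints hold, and that $\mathcal L(\cdot,\lambda^{**},\mu^{**})$ would be unbounded below unless the multipliers vanish. Your reduction is cleaner and reuses work, at the cost of invoking differentiability of $\mathcal L$; the paper's direct argument needs only the bilinear structure of $\mathcal L$ and the inequalities themselves, which makes the saddle-point uniqueness independent of the derivative computation. Both are complete proofs.
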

\begin{proof}
    It is easy to see that the triplet $(u^*, 0, 0)$ satisfies the defining inequalities~\eqref{appendix:def_saddle_point} of a saddle point as all expressions evaluate to zero. Given another triplet $(u^{**}, \lambda^{**}, \mu^{**})$ that satisfies~\ref{appendix:def_saddle_point} we realize that the condition
    \begin{equation*}
        \mathcal L(u^{**},\lambda, \mu)
        \leq
        \mathcal L(u^{**}, \lambda^{**}, \mu^{**}), \quad \textrm{for all }\lambda\in L^2(\Omega),\mu\in L^2(\partial\Omega)
    \end{equation*}
    can only hold with a finite value for $\mathcal L(u^{**}, \lambda^{**}, \mu^{**})$ if $u^{**}=u^{*}$. Moreover 
    \begin{equation*}
        \mathcal L(u^{*}, \lambda^{**}, \mu^{**})
        \leq
        \mathcal L(u,\lambda^{**}, \mu^{**}), \quad \textrm{for all }u\in H^2(\Omega)
    \end{equation*}
    can only hold with a finite value for $\mathcal L(u^{*}, \lambda^{**}, \mu^{**})$ if $\lambda^{**}=\mu^{**}=0$. Hence $(u^{**}, \lambda^{**}, \mu^{**}) = (u^*, 0, 0)$.

    Investigating the derivative of the Lagrangian~\eqref{eq:derivative_L_Lagrange_Newton}, we see that $(u^*, 0, 0)$ is a critical point. Furthermore, for any other critical point $(u^{**}, \lambda^{**}, \mu^{**})$ the last two equations of~\eqref{eq:derivative_L_Lagrange_Newton} imply that $u^{**}= u^*$. The first equation reads 
    \begin{equation*}
        (\lambda, \Delta \delta_u)_\Omega + (\mu, \delta_u)_{\partial\Omega} = 0 \quad \textrm{for all }\delta_u\in H^2(\Omega).
    \end{equation*}
    We assume first that $\mu\in H^{3/2}(\partial\Omega)$ and consider the auxiliary problem of finding $\delta_u^*\in H^2(\Omega)$ that satisfies
    \begin{align*}
        \begin{split}
            \Delta \delta_u^* &= \lambda \quad \textrm{in }\Omega,
            \\
            \delta_u^* &= \mu \quad \textrm{on }\partial\Omega.
        \end{split}
    \end{align*}
    Testing with this $\delta_u^*$ yields 
    \begin{equation*}
        \| \lambda \|_{L^2(\Omega)}^2 + \|\mu\|_{L^2(\partial\Omega)}^2 = 0 
    \end{equation*}
    and thus the desired fact $\lambda=\mu=0$. If $\mu$ is merely $L^2(\partial\Omega)$ we use the density of $H^{3/2}(\partial\Omega)\subset L^2(\partial\Omega)$ and replace $\mu$ in the auxiliary equation by $\mu_\epsilon\in H^{3/2}(\partial\Omega)$ such that $(\mu,\mu_\epsilon)_{L^2(\partial\Omega)}>0$. This then implies again $\lambda = 0$ and then also $\mu=0$.
\end{proof}

\paragraph{Correspondence of Lagrange-Newton and Competitive Gradient Descent}
We provide the missing computations to verify the correspondence claimed in the main text. Recall that 
\begin{equation*}
    D^2\mathcal L(u_0,\lambda_0, \mu_0)((\delta_u, \delta_\lambda, \delta_\mu),(\bar\delta_u, \bar\delta_\lambda, \bar\delta_\mu))
\end{equation*}
is given by
\begin{align*}
\begin{pmatrix}
    \delta_u & \delta_\lambda & \delta_\mu
\end{pmatrix}
    \begin{pmatrix}
         0 & (\cdot, \Delta \cdot)_\Omega & (\cdot, \cdot)_{\partial\Omega}
         \\
         (\cdot, \Delta\cdot)_{\Omega} & 0 & 0
         \\
         (\cdot, \cdot)_{\partial\Omega} & 0 & 0
    \end{pmatrix}
    \begin{pmatrix}
        \bar\delta_u
        \\
        \bar\delta_\lambda
        \\
        \bar\delta_\mu
    \end{pmatrix}.
\end{align*}
Discretizing this matrix in the tangent space of the neural network ansatz, i.e., using the functions $\partial_{\theta_i}u_\theta, \partial_{\psi_i}\lambda_\psi, \partial_{\xi_i}\mu_\xi$ yields a block matrix of the form
\begin{align}
    G(\theta, \psi, \xi) = 
    \begin{pmatrix}
        0   & A & B \\
        A^T & 0 & 0 \\
        B^T & 0 & 0
    \end{pmatrix},
\end{align}
where
\begin{equation*}
    A_{ij} = (\partial_{\psi_j}\lambda_\psi, \Delta \partial_{\theta_i}u_\theta)_\Omega, 
    \quad
    B = (\partial_{\xi_j}\mu_\xi, \partial_{\theta_i}u_\theta).
\end{equation*}

The matrix employed in competitive gradient descent for this problem is
\begin{align*}
    \begin{pmatrix}
    \operatorname{Id} & \eta D^2_{\theta,\psi}L & \eta D^2_{\theta,\xi}L 
    \\
    \eta D^2_{\psi\theta}L & \operatorname{Id} & \eta D^2_{\psi,\xi}L
    \\
    \eta D^2_{\xi,\theta}L & \eta D^2_{\xi,\psi}L & \operatorname{Id}
    \end{pmatrix}.
\end{align*}
For the reader's convenience we recall the loss function 
\begin{equation*}
    L(\theta, \psi, \xi)
    =
    (\lambda_\psi, \Delta u_\theta + f)_\Omega + (\mu_\xi, u_\theta - g)_{\partial\Omega}.
\end{equation*}
We compute the partial derivatives of $L$
\begin{align*}
    \partial_{\theta_i}L(\theta, \psi, \xi) 
    &=
    (\lambda_\psi, \Delta \partial_{\theta_i}u_\theta)_{\Omega} + (\mu_\xi, \partial_{\theta_i}u_\theta)_{\partial\Omega},
    \\
    \partial_{\psi_j}\partial_{\theta_i}L(\theta, \psi, \xi) 
    &=
    (\partial_{\psi_j}\lambda_\psi, \Delta \partial_{\theta_i}u_\theta)_{\Omega},
    \\
    \partial_{\xi_j}\partial_{\theta_i}L(\theta, \psi, \xi) 
    &=
    (\partial_{\xi_j}\mu_\xi, \partial_{\theta_i}u_\theta)_{\partial\Omega}.
\end{align*}
This shows that the first row and column of the CGD matrix and the Lagrange-Newton matrix agree. We proceed to compute
\begin{align*}
    \partial_{\xi_i}L(\theta, \psi, \xi) 
    &=
    (\partial_{\xi_i}\mu_\xi, u_\theta - g)_{\partial\Omega},
    \\
    \partial_{\psi_j}\partial_{\xi_i}L(\theta, \psi, \xi) 
    &=
    0,
\end{align*}
which guarantees the correspondence of the remaining blocks. Note that the derivatives
\begin{equation*}
    \partial_{\theta_i}^2L(\theta, \psi, \xi), \quad \partial_{\psi_i}^2L(\theta, \psi, \xi), \quad 
    \partial_{\theta_i}^2L(\theta, \psi, \xi), \quad 
    \partial_{\xi_i}^2L(\theta, \psi, \xi)
\end{equation*}
do not vanish unless $u_\theta, \lambda_\psi$ and $\mu_\xi$ are linear in $\theta$, $\psi$ and $\xi$, respectively. This shows once more that applying Newton's method to the discrete problem does not reproduce the infinite-dimensional algorithm.

\subsection{Details for Section~\ref{sec:gauss_newton_f_space} on Gauss-Newton Methods}\label{appendix:details_gauss_newton}
In this Section we derive the Gauss-Newton method in function spaces for the solution of nonlinear least-squares problems and discuss the connection to the classical Gauss-Newton method in Euclidean space. 

\paragraph{Derivation of the Gauss-Newton Method}
Assume we are given a Fr\'echet differentiable function $R:\mathcal H \to L^2(\Omega)$ defined on a Hilbert space $\mathcal H$ and we aim to minimize the energy
\begin{equation*}
    E:\mathcal H \to \mathbb R, \quad E(u) = \frac12\|R(u)\|^2_{L^2(\Omega)}.
\end{equation*}
Here we discuss the approach for an abstract space $\mathcal H$. To obtain the formulas for Navier-Stokes as discussed in Section~\ref{sec:gauss_newton_f_space} set
\begin{equation*}
    \mathcal H = H^2(\Omega)^d\times H^1(\Omega). 
\end{equation*}
Given an iterate $u_k\in\mathcal H$, the Gauss-Newton algorithm produces an update $u_{k+1} = u_k + d_k$ by adding the direction $d_k$ which is given via 
\begin{equation*}
    d_k 
    =
    \underset{v\in\mathcal H}{\operatorname{argmin}} \frac12 \| R(u_k) + DR(u_k)v \|^2_{L^2(\Omega)}
    \approx
    \frac12 \| R(u_k + v) \|^2_{L^2(\Omega)}
\end{equation*}
The optimality condition of the quadratic problem that $d_k$ needs to satisfy is given by
\begin{equation*}
    DR(u_k)^*R(u_k) + DR(u_k)^*DR(u_k)d_k = 0.
\end{equation*}
This is an equality in the Hilbert space $\mathcal H$ and $DR(u_k)^*:L^2(\Omega)\to \mathcal H$ denotes the Hilbert space adjoint of the map $DR(u_k)$. Applying the Riesz isomorphism $\mathcal I:\mathcal H\to\mathcal H^*$ to the equation above yields
\begin{equation*}
    DE(u_k) + \mathcal I DR(u_k)^*DR(u_k)d_k = 0
\end{equation*}
as an equality in $\mathcal H^*$ and it shows that for any $u\in\mathcal H$ the map $T_{u}$ for Gauss-Newton's method is given by
\begin{equation*}
    T_{u}:\mathcal H \to \mathcal H^*, \quad T_u = \mathcal I \circ DR(u)^*\circ DR(u).
\end{equation*}
Thus, for a neural network discretization $u_\theta$ the Gramian resulting from $T_{u_\theta}$ is given by
\begin{equation*}
    G(\theta)_{ij} 
    =
    (DR(u_\theta)[\partial_{\theta_i}u_\theta], DR(u_\theta)[\partial_{\theta_j}u_\theta])_{L^2(\Omega)}.
\end{equation*}
Approximating the integrals in the inner product above by Monte Carlo sampling\footnote{Any other quadrature is equally possible.} $x_1,\dots,x_N\in\Omega$ yields 
\begin{equation}\label{appendix:gramian_gauss_newton_fspace}
    G(\theta)_{ij} 
    \approx 
    \frac{|\Omega|}{N}\sum_{k=1}^{N_\Omega}DR(u_\theta)[\partial_{\theta_i}u_\theta](x_k)DR(u_\theta)[\partial_{\theta_j}u_\theta](x_k).
\end{equation}


\paragraph{Correspondence to Gauss-Newton in Parameter Space}
Here we prove the correspondence between Gauss-Newton in function space and parameter space. Consider the residual function $r:\Theta \to \mathbb R^N$
\begin{align*}
    r(\theta) 
    =
    \sqrt{\frac{|\Omega|}{N}}
    \begin{pmatrix}
        R(u_\theta)(x_1)\\
        \vdots\\
        R(u_\theta)(x_N)
    \end{pmatrix}.
\end{align*}
Then we can define a discrete loss function
\begin{equation*}
    L(\theta) 
    =
    \frac{1}{2}\| r(\theta) \|^2_{l^2}
    \approx
    \frac12 \| R(u_\theta) \|^2_{L^2(\Omega)}
    =
    E(u_\theta).
\end{equation*}
Applying the Euclidean version of the Gauss-Newton method to $r$ requires the Jacobian $J$ of $r$, which is given by
\begin{align*}
    J(\theta) 
    =
    \sqrt{\frac{|\Omega|}{N}}
    \begin{pmatrix}
        DR(u_\theta)[\partial_{\theta_1}u_\theta](x_1) & \dots & DR(u_\theta)[\partial_{\theta_p}u_\theta](x_1)\\
        & \vdots & \\
        DR(u_\theta)[\partial_{\theta_1}u_\theta](x_N) & \dots & DR(u_\theta)[\partial_{\theta_p}u_\theta](x_N)
    \end{pmatrix}.
\end{align*}
It is clear that $J^T(\theta)J(\theta)$ is the same as the approximation of the Gramian in equation~\eqref{appendix:gramian_gauss_newton_fspace}, given that the same quadrature points are used.
\begin{remark}[Correspondence of Newton and Gauss-Newton for linear PDEs]\label{remark:newton_is_gauss_newton_linear_pde}
    In the case of a linear PDE, i.e., when $R$ is an affine linear map the previous computations show that Newton's method in function space, as described in Section~\ref{sec:newton_f_space}, also leads to Gauss-Newton's method in parameter space. In this case, $DR(u_k)$ is independent of $u_k$ and agrees with the linear part of $R$.
\end{remark}
\end{document}